\numberwithin{equation}{section}
\newtheorem{theorem}{Theorem}[section]
\newtheorem{lemma}[theorem]{Lemma}
\newtheorem{proposition}[theorem]{Proposition}
\theoremstyle{definition}
\newtheorem{remark}{Remark}
\title[Stieltjes continued fractions related to two automatic sequences]{Stieltjes continued fractions related to 
the Paperfolding sequence and Rudin-Shapiro sequence}
\author{Wen Wu}
\address{School of Mathematics\\
South China University of Technology\\
Guangzhou 510640, China}
\email{wuwen@scut.edu.cn}
\date{}
\keywords{Paperfolding sequence, Rudin-Shapiro sequence, Stieltjes continued fractions, automatic sequences}
\subjclass[2010]{11B85, 11B50}
\begin{document}
    \begin{abstract}
    We investigate two Stieltjes continued fractions given by the paperfolding sequence and the Rudin-Shapiro sequence. By explicitly describing certain subsequences of the convergents \(P_n(x)/Q_n(x)\) modulo \(4\), we give the formal power series expansions (modulo \(4\)) of these two continued fractions and prove that they are congruent modulo \(4\) to algebraic series in \(\mathbb{Z}[[x]]\). Therefore, the coefficient sequences of the formal power series expansions are \(2\)-automatic. Write \(Q_{n}(x)=\sum_{i\ge 0}a_{n,i}x^{i}\). Then \((Q_{n}(x))_{n\ge 0}\) defines a two-dimensional coefficient sequence \((a_{n,i})_{n,i\ge 0}\). We prove that the coefficient sequences \((a_{n,i}\mod 4)_{n\ge 0}\) introduced by both \((Q_{n}(x))_{n\ge 0}\) and \((P_{n}(x))_{n\ge 0}\) are \(2\)-automatic for all \(i\ge 0\). Moreover, the pictures of these two dimensional coefficient sequences modulo \(4\) present a kind of self-similar phenomenon.
    \end{abstract}
    \maketitle

    \setcounter{tocdepth}{1}
    \tableofcontents

    \section{Introduction}
    In number theory, a fascinating topic is to discuss representations of real numbers. To understand a real number, especially an irrational number, we usually consider its base \(k\) representation, where \(k\ge 2\) is an integer. For example, the work of finding the decimal representation of \(\pi\) is still ongoing. Besides the decimal representation, people are also interested in finding other representations for real numbers, such as the continued fraction expansions. One reason to do this is that for some real numbers, like $\pi$ or $e$, the decimal expansion is irregular but the continued fraction expansion is elegant; see for example \cite{La99, Olds70}. 
    
    This motivates the study of describing the continued fraction expansions of relatively simple power series. A class of simple power series is the power series whose coefficient sequences are automatic sequences. Let \(\mathbf{u}=(u_{n})_{n\ge 0}\) be a sequence with values in \(\mathbb{F}_{q}\). Christol's theorem \cite{Ch79, CKMR80} gives a sufficient and necessary condition of algebraicity for the formal power series \(f(x)=\sum_{n\ge 0}u_nx^{n}\) and automaticity for the sequence \(\mathbf{u}\). Suppose that \(f(x)\) is algebraic over \(\mathbb{F}_{q}(x)\). Then \(\mathbf{u}\) can be recognized as the base \(b\) expansion of the real number \(f(1/b)\) where \(2\leq b< q\) is an integer. If \(u\) is a \(q\)-automatic sequence, then a deep result by Adamczewski and Bugeaud \cite{AB07} showed that \(f(1/b)\) is either transcendental or rational. When \(f(1/b)\) is transcendental, in the study of the Diophantine properties of \(f(1/b)\), it is natural to ask if its continued fraction expansion has bounded partial quotients or not. Van der Poorten and Shallit showed in \cite{PS92} that \(S_{\infty}(1/2)\) consists of partial quotients \(1\) and \(2\), where \(S_{\infty}(x)=\sum_{k=0}^{\infty}x^{2^{k}}\) satisfies the functional equation \[S_{\infty}(x^{2})=S_{\infty}(x)-x.\] Let \(\mathbf{t}=t_0t_1t_2\dots\) be the Thue-Morse sequence on \(\{0,1\}\) given by the recurrence relations $t_0=0$ and for all integer $n\ge 0$, 
\begin{align*}
t_{2n}=t_n\quad \text{and}\quad t_{2n+1}=t_{n}.
\end{align*}
Allouche and Shallit \cite[Open problem 9, p. 403]{AS03} asked whether the Thue-Morse constant \(\tau_{TM}=\sum_{n\ge 0}\frac{t_n}{2^{n+1}}\) has bounded partial quotients. Bugeaud and Queff\'{e}lec \cite{BQ13} showed that the continued fraction expansion of  \(\tau_{TM}\) has infinitely many partial quotients equal to \(4\) or \(5\) and infinitely many partial quotients greater than or equal to \(50\). Badziahin and Zorin \cite{BZ15} proved that the Thue-Morse constant \(\tau_{TM}\) is not badly approximable. Namely, its partial quotients are not bounded. 
    
    For the continued fraction expansions of algebraic elements \(f(x)\), there are not many results. Baum and Sweet \cite{BS76} (and \cite{BS77}) gave an example of an algebraic element of degree \(3\) (and \(k\)) with a bounded (i.e. bounded degree) continued fraction expansion; the coefficient sequence of that algebraic series of degree \(3\) is now known as the Baum-Sweet sequence. In \cite{AFP91}, Allouche, Mend\'{e}s France and van der Poorten showed that functions given by certain infinite products have linear partial quotients. In \cite{van93, van98}, van der Poorten also studied continued fraction expansions for other infinite products. In 2016, Han \cite{Han16} proved an analogue of Lagrange's theorem for Hankel continued fractions of quadratic power series on finite fields.


    Now we consider the continued fraction given an automatic sequence as the sequence of partial quotients. Bugeaud \cite{Bug13} showed that the continued fraction expansion for any algebraic number of degree at least three cannot be generated by a finite automaton. In this paper, we study the formal power expansions of two automatic Stieltjes continued fractions. 
    We assign any finite word \(\mathbf{w}=w_0w_1\cdots w_n\in\{-1,1\}^{n+1}\) to the rational fraction 
    \[\text{Stiel}_{\mathbf{w}}(x):=\cfrac{w_{0}x}{1+\cfrac{w_{1}x}{\cfrac{\ddots}{1+\cfrac{w_{n-1}x}{w_{n}x}}}}.\] 
    An infinite word \(\mathbf{a}=a_{0}a_{1}a_{2}\dots\in\{-1,1\}^{\infty}\)  defines an infinite Stieltjes continued fraction by
    \[\text{Stiel}_{\mathbf{a}}(x):=\lim_{n\to\infty}\text{Stiel}_{\mathbf{a}|_{n}}(x) = 
    \cfrac{a_{0}x}{1+\cfrac{a_{1}x}{1+\cfrac{a_{2}x}{1+\cfrac{a_{3}x}{1+\cfrac{a_{4}x}{\ddots}}}}}\]
    where \(\mathbf{a}|_{n}:=a_0a_1\cdots a_n\) for all \(n\ge 0\). The fraction \(\text{Stiel}_{\mathbf{a}|_{n}}(x)\) is called the \(n\)th \emph{ convergent} of \(\text{Stiel}_{\mathbf{a}}(x)\). For a detail discussion of Stieltjes continued fractions, see \cite{Wall48}. We denote by \(\mathrm{Stiel}_{\mathbf{a}}\) the coefficient sequence of the formal power series expansion of \(\mathrm{Stiel}_{\mathbf{a}}(x)\). Namely, \(\mathrm{Stiel}_{\mathbf{a}}:=(b_{n})_{n\ge 0}\) where \(\mathrm{Stiel}_{\mathbf{a}}(x)=\sum_{n\ge 0}b_{n}x^{n}\in\mathbb{Z}[[x]]\).

    In \cite{HH19}, Han and Hu proved that the Stieltjes continued fractions given by the Thue-Morse sequence and the period doubling sequence are congruent modulo $4$ to algebraic series in \(\mathbb{Z}[[x]]\). Here, we investigate the paperfolding sequence \(\mathbf{p}\) and  Rudin-Shapiro sequence \(\mathbf{r}\). We show that the Stieltjes continued fractions \(\mathrm{Stiel}_{\mathbf{p}}(x)\) and \(\mathrm{Stiel}_{\mathbf{r}}(x)\) are both congruent modulo $4$ to algebraic series in \(\mathbb{Z}[[x]]\). Our results are the following.
    \begin{theorem}\label{thm:01}
    The Stieltjes continued fraction given by the paperfolding sequence is congruent modulo $4$ to an algebraic series in \(\mathbb{Z}[[x]]\). Precisely, \[\mathrm{Stiel}_{\mathbf{p}}(x) \equiv 2x + (3x+2x^{3})\phi(x)\quad (\bmod~4),\] where \(\phi(x)=\sum_{n\ge 0}C_{n}x^{n}\) and $C_n=\frac{1}{n+1}\binom{2n}{n}$ is the $n$th Catalan number.  Moreover, \(\mathrm{Stiel}_{\mathbf{p}}\) modulo \(4\) is \(2\)-automatic.
    \end{theorem}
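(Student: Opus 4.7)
The plan is to compute the formal power series expansion of $\mathrm{Stiel}_{\mathbf{p}}(x)$ modulo $4$ by controlling the convergents $P_n(x)/Q_n(x)$ along a cleverly chosen subsequence of indices, then to identify the resulting series with the closed form $2x+(3x+2x^3)\phi(x)$, and finally to invoke the extension of Christol's theorem to reductions modulo $p^k$ of integer-algebraic power series. First, I would set up the standard convergent recurrences $P_{n+1}=P_n+p_{n+1}xP_{n-1}$ and $Q_{n+1}=Q_n+p_{n+1}xQ_{n-1}$ with the usual initial data, observe that $Q_n\equiv 1\pmod x$ so $Q_n^{-1}\in\mathbb{Z}[[x]]$, and note that the $x$-valuation of $P_{n+1}/Q_{n+1}-P_n/Q_n$ tends to infinity; hence to determine $\mathrm{Stiel}_{\mathbf{p}}(x)\bmod 4$ it is enough to describe $P_{n_k}/Q_{n_k}\bmod 4$ along some increasing subsequence $n_k\to\infty$.

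Second, I would exploit the $2$-substitutive structure of $\mathbf{p}$ (namely $p_{4k+1}=1$, $p_{4k+3}=-1$, $p_{2k}=p_k$) to pick the right subsequence. A natural candidate is $n_k=2^{k+1}-1$ or a nearby dyadic index, because the prefix $p_0p_1\cdots p_{n_k}$ then decomposes into blocks acted on by the same substitution. Running the convergent recursion over one block of length $2^k$ can be encoded as a product of $\mathrm{SL}_2(\mathbb{Z}[x])$-matrices, and the block-substitutive structure of $\mathbf{p}$ should translate into a self-similar identity on these matrix products modulo $4$. Computing $P_n,Q_n$ symbolically for small $n$, reducing mod $4$, and fitting the pattern should produce explicit closed-form expressions for $P_{n_k}\bmod 4$ and $Q_{n_k}\bmod 4$, which I would then verify by induction on $k$. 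The main technical obstacle is here: the cross terms in the recurrence are not divisible by $2$, so mod-$4$ arithmetic does not collapse into mod-$2$ arithmetic, and the induction must carry simultaneously the ``mod-$2$ shape'' of $P_n,Q_n$ and the ``mod-$4$ correction'' so that the recursive step closes; bookkeeping the parity of the two relevant $\{-1,+1\}$ entries inside each $2^k$-block is where the bulk of the calculation will live.

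Third, once a closed form is established, I would compute $P_{n_k}Q_{n_k}^{-1}\bmod 4$ in $(\mathbb{Z}/4\mathbb{Z})[[x]]$ and pass to the limit $k\to\infty$. Using the Catalan functional equation $\phi(x)=1+x\phi(x)^2$, I would check that the limit equals $2x+(3x+2x^3)\phi(x)$ modulo $4$; this reduces to a routine manipulation in $(\mathbb{Z}/4\mathbb{Z})[[x]]$, showing that the series on the right satisfies the same contraction identity (in $x$) as the limit of the convergents, or equivalently matching coefficients via a generating-function identity for shifted Catalan numbers modulo $4$.

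Finally, for the $2$-automaticity statement, the series $2x+(3x+2x^3)\phi(x)$ is $\mathbb{Z}[x]$-linear in $\phi$ and $\phi$ is algebraic over $\mathbb{Q}(x)$, so the whole expression is algebraic over $\mathbb{Z}[x]$; its reduction modulo $4$ is therefore algebraic over $(\mathbb{Z}/4\mathbb{Z})[x]$. By the extension of Christol's theorem to finite cyclic coefficient rings — algebraic power series in $\mathbb{Z}[[x]]$ reduce modulo $p^k$ to series whose coefficient sequences are $p$-automatic — one concludes that $\mathrm{Stiel}_{\mathbf{p}}\bmod 4$ is $2$-automatic, completing the proof.
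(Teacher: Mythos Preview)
Your proposal is correct and follows essentially the same route as the paper: choose dyadic indices $2^{n}-1$ and $2^{n}-2$, encode the convergents via $2\times 2$ matrix products that factor according to the substitution $\sigma^{n}(a)=\sigma^{n-1}(a)\sigma^{n-1}(b)$ (the paper keeps auxiliary convergents $P^{b},Q^{b}$ for the $b$-block), prove closed forms for $P_{2^{n}-\ell},Q_{2^{n}-\ell}\pmod 4$ by induction on $n$, pass to the limit using $Q_{2^{n}-2}^{2}\equiv_{4}1$, then identify the limit with $2x+(3x+2x^{3})\phi(x)$ via the Catalan congruences mod $4$ and the relation $\phi=1+x\phi^{2}$, and conclude automaticity from the Denef--Lipshitz extension of Christol's theorem. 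The only point you left implicit is that the paper's bridge from the limiting expressions in $S_{\infty},T_{\infty}$ to $\phi$ goes through the explicit values of $C_{n}\bmod 4$ (which single out indices $2^{a}-1$ and $2^{b}+2^{a}-1$), but your ``generating-function identity for shifted Catalan numbers modulo $4$'' is exactly this.
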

    
    \begin{theorem}\label{thm:02}
    The Stieltjes continued fraction given by the Rudin-Shapiro sequence is congruent modulo $4$ to an algebraic series in \(\mathbb{Z}[[x]]\). Namely, \[\mathrm{Stiel}_{\mathbf{r}}(x)\equiv x+2x^{2}+ 2x^{3} +(3x+2x^{3})\phi(x)+x\sqrt{1-4x\phi(x)}\quad (\bmod~4).\] Moreover, \(\mathrm{Stiel}_{\mathbf{r}}\) modulo \(4\) is \(2\)-automatic.
    \end{theorem}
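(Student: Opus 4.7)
The plan is to follow the same strategy used in the proof of Theorem~\ref{thm:01}, adapted to the Rudin--Shapiro sequence. Let $P_n(x)/Q_n(x)$ denote the convergents of $\mathrm{Stiel}_{\mathbf{r}}(x)$, satisfying the standard three-term recurrences
\begin{equation*}
P_n = P_{n-1} + r_n x\, P_{n-2}, \qquad Q_n = Q_{n-1} + r_n x\, Q_{n-2}.
\end{equation*}
Because $\mathbf{r}$ is $2$-automatic, the natural subsequences to track are indexed by $2^k$ and small offsets thereof. The first step is to prove by induction on $k$ explicit formulas modulo $4$ for $P_{2^k+c}$ and $Q_{2^k+c}$ for a small set of constants $c$, using the recurrences $r_{2n}=r_n$ and $r_{2n+1}=(-1)^n r_n$ to relate the subsequence at level $k+1$ to that at level $k$.

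Passing to the limit $k\to\infty$ within each subsequence and extracting $P_n/Q_n$ modulo $4$ then yields a closed form for $\mathrm{Stiel}_{\mathbf{r}}(x) \bmod 4$. Comparison with the paperfolding case suggests that one family of convergents contributes the rational-in-$\phi$ part $x+2x^2+2x^3+(3x+2x^3)\phi(x)$, while the remaining convergents produce an auxiliary series $\psi(x)$ satisfying a quadratic congruence of the form $\psi(x)^{2} \equiv x^{2}\bigl(1-4x\phi(x)\bigr) \pmod{4}$, which after extracting a square root in $(\mathbb{Z}/4\mathbb{Z})[[x]]$ contributes the term $x\sqrt{1-4x\phi(x)}$. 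Algebraicity of the right-hand side is then immediate: since $\phi$ is quadratic over $\mathbb{Z}(x)$, the series $\sqrt{1-4x\phi(x)}$ is quadratic over $\mathbb{Z}(x)(\phi)$, and hence the whole expression is algebraic over $\mathbb{Z}(x)$.

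For the $2$-automaticity of $\mathrm{Stiel}_{\mathbf{r}} \bmod 4$, reducing the identity further modulo $2$ shows that $\mathrm{Stiel}_{\mathbf{r}} \bmod 2$ is the coefficient sequence of an algebraic power series over $\mathbb{F}_{2}(x)$, and hence is $2$-automatic by Christol's theorem. A standard argument, expressing any $\mathbb{Z}/4\mathbb{Z}$-valued sequence as the join of its mod-$2$ reduction and its carry sequence, then promotes this to $2$-automaticity modulo $4$, since both $\phi(x)$ and $\sqrt{1-4x\phi(x)}$ remain algebraic over $\mathbb{F}_{2}(x)$ after reduction.

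The main obstacle is the appearance of the square root. In the earlier results of \cite{HH19} on the Thue--Morse and period-doubling sequences, as well as in Theorem~\ref{thm:01}, the continued fraction modulo $4$ turned out to be rational in $\phi$. The Rudin--Shapiro case genuinely requires a quadratic extension, so one must identify precisely which subsequence of convergents encodes the quadratic behaviour and verify the auxiliary identity $\psi(x)^{2} \equiv x^{2}(1-4x\phi(x)) \pmod{4}$. This will likely require careful bookkeeping across a full period of the $4$-state automaton generating $\mathbf{r}$, and is the principal case-specific step of the proof.
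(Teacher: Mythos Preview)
Your high-level strategy---track the convergents along dyadic subsequences, prove explicit formulas modulo~$4$ by induction, pass to the limit, and express the result in terms of $\phi$---is the right one and matches the paper. But the mechanism you propose for the square root is off, and this is a genuine gap rather than a missing detail. In the paper the limit is taken along a \emph{single} subsequence (namely $P_{2^{2j+1}-2}/Q_{2^{2j+1}-2}$), and after simplification the answer contains a term $2xS^{e}_{\infty}(x)$ where $S^{e}_{\infty}(x)=\sum_{i\ge 0}x^{2^{2i}}$. The square root enters because $S^{e}_{\infty}$ satisfies the Artin--Schreier--type congruence $y^{2}+y\equiv_{2} S_{\infty}(x)\equiv_{2} x\phi(x)$, whose unique solution in $\mathbb{Z}[[x]]$ with $y(0)=0$ is $\tfrac{-1+\sqrt{1-4x\phi(x)}}{2}$; multiplying by $2x$ then gives the term $x\sqrt{1-4x\phi(x)}$ modulo~$4$. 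Your framing---two families of convergents, one of which produces a $\psi$ with $\psi^{2}\equiv x^{2}(1-4x\phi(x))\pmod 4$---does not reflect what actually happens, and would not work as stated: a congruence $\psi^{2}\equiv A\pmod 4$ does not determine $\psi\bmod 4$ (or even $\bmod 2$). The crucial point is that the relevant series appears with a factor of~$2$ in front, so it only needs to be pinned down modulo~$2$, and the equation it satisfies is $y^{2}+y=c$, not $y^{2}=c$.

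On the automaticity side, your Christol-plus-carry argument is an unnecessary detour. The paper invokes the Denef--Lipshitz theorem (Theorem~\ref{thm:ckmr}), which says directly that the coefficient sequence of an algebraic series in $\mathbb{Z}_{p}[[x]]$ is $p$-automatic modulo $p^{s}$ for every $s\ge 1$; once the congruence to an algebraic series is established, automaticity modulo~$4$ is immediate.
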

    
     Based on the result in \cite{HH19} and our results, it is natural to ask that if one can characterize those automatic sequences \(\mathbf{a}\) on the alphabet \(\{-1,1\}\) such that \(\mathrm{Stiel}_{\mathbf{a}}(x)\) is congruent  modulo \(4\) to an algebraic series in \(\mathbb{Z}[[x]]\).

    The paper is organized as follows. In Section \ref{sec:pre}, we give the definition of \(k\)-automatic sequences and introduce the paperfolding sequence and Rudin-Shapiro sequence. In Section \ref{sec:coeff}, we discuss the coefficient sequences for both the numerator and the denominator of the convergents. Visualizations of these coefficient sequences are also provided. In Section \ref{sec:paper}, we prove Theorem \ref{thm:01}. In Section \ref{sec:rudin}, we prove Theorem \ref{thm:02}.

    \section{Preliminaries}\label{sec:pre}
    \textbf{Substitution and coding.} Let \(\mathcal{A}\) be a finite set, called an \emph{alphabet}. The collection of all words on the alphabet \(\mathcal{A}\) of length \(n\) is denoted by \(\mathcal{A}^{n}\), where \(n\ge 1\) is an integer. In addition, let \(\varepsilon\) be the empty word and \(\mathcal{A}^{0}=\{\varepsilon\}\). The set of all finite words is \(\mathcal{A}^{*}:=\cup_{n\ge 0}\mathcal{A}^{n}\). The \emph{concatenation} of two finite words \(w=w_0w_1\cdots w_n\) and \(v=v_0v_1\cdots v_m\) is the finite word \(wv=w_0w_1\cdots w_nv_0v_1\cdots v_m\). The set \(\mathcal{A}^{*}\) together with concatenation becomes a monoid. A \emph{substitution} is a morphism \(\sigma:\mathcal{A}\to\mathcal{A}^{*}\) which can be extended to \(\mathcal{A}^{*}\). Let \(\mathcal{B}\) be another alphabet. A morphism \(\rho:\mathcal{A}^{*}\to \mathcal{B}^{*}\) is called a \emph{coding} if \(\rho(a)\in\mathcal{B}\) for all \(a\in\mathcal{A}\).
    
    \medskip
    \textbf{Automatic sequences.} Let \(\mathbf{a}=(a_{n})_{n\ge 0}\) be an infinite sequence on an alphabet \(\mathcal{A}\). Let \(k\ge 2\) be an integer. The \(k\)-\emph{kernel} of \(\mathbf{a}\) is the set of subsequences 
    \[K_{k}(\mathbf{a}):=\left\{(a_{k^{i}n+j}) : i\ge 0\ \text{and}\ 0\leq j< k^{i}\right\}.\]
    The sequence \(\mathbf{a}\) is \(k\)-\emph{automatic} if and only if its \(k\)-kernel \(K_{k}(\mathbf{a})\) is finite. 
    
    The following theorem of Christol says the coefficient sequence of algebraic formal power series is automatic.
    \begin{theorem}[Christol et al. \cite{CKMR80}]
        Let \((a_n)_{n\ge 0}\) be a sequence of elements in \(\mathbb{F}_p\). Then \(\sum_{n\ge 0}a_n x^{n}\) is algebraic over \(\mathbb{F}_{p}(x)\)  if and only if \((a_n)_{n\ge 0}\) is \(p\)-automatic.
    \end{theorem}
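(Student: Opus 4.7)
The plan is to prove the two implications separately; the forward direction is relatively short and self-contained, while the converse requires a more delicate argument. For the direction \emph{$p$-automatic implies algebraic}, I would let $V$ be the $\mathbb{F}_p$-linear span of the power series corresponding to the $p$-kernel, which is finite-dimensional by hypothesis. The key observation is the Frobenius decomposition
\[
f(x) \;=\; \sum_{j=0}^{p-1} x^{j}\, F_{j}(x^{p}) \;=\; \sum_{j=0}^{p-1} x^{j}\, F_{j}(x)^{p},
\]
where $F_{j}(x)=\sum_{n\ge 0} a_{pn+j}\, x^{n}$ and the second equality uses that the $F_j$ have coefficients in $\mathbb{F}_p$, so the $p$-th power is additive. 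Applying this identity to every basis element of $V$ produces a matrix relation expressing a chosen basis of $V$ as $\mathbb{F}_p[x]$-linear combinations of the $p$-th powers of that same basis; eliminating the other basis elements (for instance, by resultants) yields a nontrivial polynomial equation in $f, f^{p}, \ldots, f^{p^{d}}$ over $\mathbb{F}_{p}[x]$, hence algebraicity.

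For the converse \emph{algebraic implies $p$-automatic}, I would work with the Cartier operators $\Lambda_{j}\colon \sum c_{n} x^{n}\mapsto \sum c_{pn+j} x^{n}$ for $0\le j< p$; the $p$-kernel of $(a_n)$ is precisely the orbit of $f$ under the monoid generated by $\Lambda_{0},\dots,\Lambda_{p-1}$. Let $P(x,y)\in\mathbb{F}_p[x,y]$ be an irreducible polynomial with $P(x,f)=0$ and put $K=\mathbb{F}_p(x,f)$. Following Christol, the strategy is to exhibit a finite-dimensional $\mathbb{F}_p(x^p)$-vector subspace $W\subset K$ that contains $f$ and is stable under every $\Lambda_{j}$; once such a $W$ is produced, the full orbit of $f$ under the $\Lambda_j$ lies in $W\cap \mathbb{F}_p[[x]]$, which has finite $\mathbb{F}_p$-dimension, forcing the $p$-kernel to be finite.

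The main obstacle is the construction of the stable space $W$. One needs $W$ large enough to contain $f$ but small enough to remain finite-dimensional, and the subtle point is that the $\Lambda_j$ are only $\mathbb{F}_p(x^p)$-linear, not $\mathbb{F}_p(x)$-linear, so their action can introduce denominators not present in $f$. The standard resolution is to view $K$ as an $\mathbb{F}_p(x^p)$-vector space of dimension $p\cdot\deg_{y} P$, then to bound the denominators that appear in $\Lambda_j(W)$ by a careful partial-fraction analysis over $\mathbb{F}_p(x^p)$, enlarging $W$ finitely many times until it closes up under all the $\Lambda_j$ simultaneously. Completing the bookkeeping cleanly is the heart of the argument; I would ultimately refer to the proofs in \cite{Ch79} and \cite{CKMR80} rather than reproduce it in full.
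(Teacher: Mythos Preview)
The paper does not prove this statement at all: it is quoted as a known theorem of Christol et al.\ with a citation to \cite{Ch79,CKMR80}, and is used later only as a black box (together with its extension by Denef--Lipshitz). So there is no ``paper's own proof'' to compare against.

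Your outline is the standard one and matches what is in the cited references and in textbook treatments such as \cite{AS03}. The forward direction via the Frobenius identity $F(x^{p})=F(x)^{p}$ and elimination among $f,f^{p},\dots,f^{p^{d}}$ is exactly the usual argument. For the converse, your sketch via Cartier operators and a stable finite-dimensional space is also the right shape; the one sentence I would tighten is the claim that ``$W\cap\mathbb{F}_{p}[[x]]$ has finite $\mathbb{F}_{p}$-dimension'': a finite-dimensional $\mathbb{F}_{p}(x^{p})$-subspace of $K$ intersected with $\mathbb{F}_{p}[[x]]$ is not automatically finite over $\mathbb{F}_{p}$. What one actually shows is that the orbit lies in a set of the form $\{\,h^{-1}\sum_{i} g_{i} f^{i} : \deg g_{i}\le N\,\}$ for a fixed $h\in\mathbb{F}_{p}[x]$ and a fixed bound $N$, and it is this explicit degree bound that makes the orbit finite. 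You correctly flag this bookkeeping as the heart of the matter and defer to \cite{Ch79,CKMR80}, which is also what the paper does.
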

Denef and Lipshitz \cite{DL87} extend Christol's theorem in the following way; for details see \cite[Theorem 3.1 \& 4.1]{DL87}. 
   \begin{theorem}[Denef and Lipshitz \cite{DL87}]\label{thm:ckmr}
   If the power series $f(x_1,\dots,x_k)\in\mathbb{Z}_p[[x_1,\dots,x_k]]$ is algebraic over $\mathbb{Z}_p[x_1,\dots,x_k]$, then for all integer $s\ge 1$, the coefficient sequence of $f$ ($\mathrm{mod}~p^{s}$) is $p$-automatic.
   \end{theorem}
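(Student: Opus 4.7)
The plan is to reduce the statement to the case $s=1$ (the multivariate Christol theorem, due to Salon) by an induction on $s$. Write $f = f_0 + pf_1 + p^2 f_2 + \cdots$ where each $f_i \in \mathbb{F}_p[[x_1,\dots,x_k]]$ is the $i$th $p$-adic digit of $f$. For fixed $s$, the coefficient sequence of $f \bmod p^s$ takes values in $\mathbb{Z}/p^s\mathbb{Z}$ and its entry at multi-index $\mathbf{n}$ is $\sum_{i=0}^{s-1} p^i a_i(\mathbf{n})$, where $a_i(\mathbf{n})$ is the $\mathbf{n}$th coefficient of $f_i$. Since a $\mathbb{Z}$-linear combination (with bounded coefficients) of finitely many $p$-automatic sequences is again $p$-automatic (this follows at once from the kernel characterization), the theorem will follow if I can show that each $f_i$ for $0 \le i \le s-1$ is algebraic over $\mathbb{F}_p[x_1,\dots,x_k]$ and then apply Salon's multivariate version of Christol.

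For the algebraicity of the $f_i$, let $P(Y) \in \mathbb{Z}_p[x_1,\dots,x_k][Y]$ be a polynomial satisfied by $f$. Reducing $P(f)=0$ modulo $p$ gives $P_0(f_0) = 0$ in $\mathbb{F}_p[[x_1,\dots,x_k]]$, so $f_0$ is algebraic. Proceeding inductively, expand $P(f_0 + pf_1 + \cdots + p^i f_i) \equiv 0 \pmod{p^{i+1}}$ and compare coefficients of $p^i$: the relation takes the shape $P'(f_0)\,f_i + R_i(f_0,\dots,f_{i-1}) \equiv 0 \pmod{p}$, where $R_i$ is a polynomial expression in the already-constructed $f_0, \dots, f_{i-1}$. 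Provided $P'(f_0) \not\equiv 0$, this solves for $f_i$ inside the finite algebraic extension $\mathbb{F}_p(x_1,\dots,x_k)(f_0,\dots,f_{i-1})$, giving the desired algebraicity.

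The main obstacle is the possible inseparability of the reduction, i.e.\ the case $P'(f_0) \equiv 0$. To handle this, I would first arrange that $P$ is the minimal polynomial of $f$ over $\mathbb{Z}_p(x_1,\dots,x_k)$ and then, if necessary, pass to a finite unramified extension of $\mathbb{Z}_p$ so that the roots of $P$ mod $p$ become distinct, or alternatively apply the $p$-power Frobenius trick to replace $P$ by a separable polynomial satisfied by some $f^{p^m}$ whose algebraicity still implies that of $f$ modulo $p$; this is the genuinely subtle piece, and is the technical heart of Denef--Lipshitz's argument.

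An alternative formulation of the same strategy, which avoids manipulating digits explicitly, is to work with the Cartier operators $\Lambda_{\mathbf{r}}$ for $\mathbf{r} \in \{0,\dots,p-1\}^k$ acting on $(\mathbb{Z}/p^s\mathbb{Z})[[x_1,\dots,x_k]]$; one shows that the orbit of $f \bmod p^s$ under iterated Cartier operators is contained in a finitely generated $(\mathbb{Z}/p^s\mathbb{Z})$-submodule of the power series ring, which immediately yields finiteness of the $p$-kernel. The finite generation comes from the same algebraic relation $P(f)=0$, and the inseparability difficulty resurfaces in the same place.
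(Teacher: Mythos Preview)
The paper does not prove this theorem at all: it is quoted as a known result from Denef and Lipshitz \cite{DL87}, with a pointer to their Theorems~3.1 and~4.1, and is then used as a black box in the proofs of Theorems~\ref{thm:01} and~\ref{thm:02}. There is therefore nothing in the paper to compare your proposal against.

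As for the proposal itself: the overall strategy (reduce to characteristic $p$ via the $p$-adic digits and invoke the multivariate Christol/Salon theorem) is a legitimate route, and you are right that the inseparable case is the real obstacle. But your inductive step hides a second difficulty you do not mention. The series $f_0,f_1,\dots$ live in $\mathbb{F}_p[[x_1,\dots,x_k]]$, while $P$ has coefficients in $\mathbb{Z}_p$; to write ``$P(f_0+pf_1+\cdots+p^if_i)\equiv 0\pmod{p^{i+1}}$'' you must first lift the $f_j$ back to characteristic zero, and the resulting ``polynomial expression'' $R_i$ then involves the Witt-vector addition and multiplication polynomials, not ordinary polynomials in the $f_j$. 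This can be made rigorous, but it is not as clean as the sketch suggests, and it is a gap independent of the separability issue. Your alternative formulation via Cartier/section operators acting on $(\mathbb{Z}/p^s\mathbb{Z})[[x_1,\dots,x_k]]$ and a finitely generated stable submodule is in fact much closer to how Denef and Lipshitz actually argue, and avoids the digit decomposition entirely.
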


    For our purpose, we collect some known properties of \(k\)-automatic sequences in the following lemma; for details, see Theorem 5.4.1, Theorem 5.4.3, Corollary 5.4.5 and Theorem 6.8.1 in \cite{AS03}. 
    \begin{lemma}[see \cite{AS03}] \label{lem:auto-prop}
        Let \((a_{n})_{n\ge 0}\) and  \((b_{n})_{n\ge 0}\) be two \(k\)-automatic sequences with values in finite sets \(\mathcal{A}\) and \(\mathcal{A}'\) respectively.
        \begin{enumerate}
            \item[(1)] If the sequence \((d_n)_{n\ge 0}\) differs only in finitely many terms from \((a_{n})_{n\ge 0}\), then \((d_n)_{n\ge 0}\) is \(k\)-automatic.
            \item[(2)] Let \(\rho\) be a coding. Then \((\rho(a_n))_{n\ge 0}\) is \(k\)-automatic.
            \item[(3)] Let \(f:\mathcal{A}\times\mathcal{A}'\to\mathcal{A}''\) be any function into the finite set \(\mathcal{A}''\). Then the sequence \((f(a_n,b_n))_{n\ge 0}\) is \(k\)-automatic.
            \item[(4)] For all integers \(s,t\ge 0\), the subsequence \((a_{sn+t})_{n\ge 0}\) is \(k\)-automatic.
        \end{enumerate}
        
    \end{lemma}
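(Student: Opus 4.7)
The plan is to verify each part by showing that the $k$-kernel of the new sequence is finite, leveraging finiteness of $K := K_k(\mathbf{a})$ and $K' := K_k(\mathbf{b})$.

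Parts (1)--(3) follow from direct manipulation. For (1), let $P$ be the finite set of indices where $\mathbf{a}$ and $\mathbf{d}$ disagree; then for any pair $(i,j)$ with $0 \le j < k^i$, the subsequence $(d_{k^in+j})_n$ differs from $(a_{k^in+j})_n$ in at most $|P|$ positions (those $n$ for which $k^in+j\in P$), so each element of $K_k(\mathbf{d})$ is a bounded modification of some element of $K$, and since $\mathcal{A}$ is finite this yields only finitely many possibilities. For (2), the coding $\rho$ commutes with decimation, so $K_k(\rho(\mathbf{a})) = \{\rho \circ s : s \in K\}$ is immediately finite. For (3), the product sequence $\mathbf{c}=((a_n,b_n))_{n\ge 0}$ on $\mathcal{A}\times\mathcal{A}'$ satisfies $K_k(\mathbf{c}) \hookrightarrow K \times K'$ under the component-wise projection, hence is finite; applying (2) with the coding $(a,b)\mapsto f(a,b)$ then gives $k$-automaticity of $(f(a_n,b_n))_n$.

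Part (4) is where the main obstacle lies. When $s = k^r$ and $0 \le t < k^r$, the subsequence $(a_{k^rn+t})_n$ sits in $K$ by definition; for $t \ge k^r$ one uses the decomposition $t = k^r q + t'$ with $0\le t'<k^r$ to write $a_{k^rn+t} = a_{k^r(n+q)+t'}$, i.e.\ a shift of a kernel element, and shifts are absorbed by applying (1) finitely many times. For general $s$, however, the kernel elements of $(a_{sn+t})$ are the sequences $(a_{sk^in + (sj+t)})_n$ for $i\ge 0$ and $0\le j<k^i$, and these do not obviously reduce to a finite subset of $K$ by elementary index manipulation. The cleanest route is to invoke Cobham's equivalence between $k$-automaticity and computability by a deterministic finite automaton with output (DFAO) reading base-$k$ digits. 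Given a DFAO computing $(a_n)$, the map $n \mapsto sn+t$ can be implemented by a finite-state transducer on base-$k$ digits because the propagating carry is bounded in terms of $s$ and $t$ alone; composing this transducer with the original DFAO produces a DFAO, hence a finite $k$-kernel, for $(a_{sn+t})_n$.
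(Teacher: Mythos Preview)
The paper does not prove this lemma at all: it is stated as a collection of known facts with a pointer to specific theorems in Allouche--Shallit \cite{AS03}. So there is no in-paper argument to compare against, and your sketch stands on its own merits.

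Your outline is essentially correct, but two steps are imprecise. In (1), observing that each $(d_{k^in+j})_n$ differs from the corresponding kernel element of $\mathbf{a}$ in at most $|P|$ positions does not by itself give finiteness: the \emph{locations} of those discrepancies depend on $(i,j)$, and there are infinitely many ways to modify at most $|P|$ entries of a fixed sequence over a finite alphabet. The missing observation is that every discrepancy index $n$ satisfies $k^{i}n+j\le\max P$, hence $n\le\max P$, so only finitely many position-sets (and thus finitely many modified sequences) can occur. In your warm-up case of (4), writing $a_{k^{r}n+t}=a_{k^{r}(n+q)+t'}$ exhibits a \emph{shift} of a kernel element, but a shift is not a finite modification, so invoking (1) is not legitimate there; one needs the separate (easy) fact that deleting an initial segment preserves $k$-automaticity. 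Neither slip undermines your main transducer argument for general $s$ in (4), which is valid, and parts (2) and (3) are clean.
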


    The next result can be used to deal with running sums and running products of \(k\)-automatic sequences.
    \begin{lemma}[Theorem 2 in \cite{AF86}]\label{lem:running}
        Let \(\mathcal{A}\) be an alphabet on which an associative operation \(*\) is defined. Let \((x_{n})_{n\ge 0}\) be a \(k\)-automatic sequence on \(\mathcal{A}\). Then the sequence \((y_{n}:=x_{n-1}*x_{n-2}*\dots * x_0)_{n\ge 1}\) is \(k\)-automatic.
    \end{lemma}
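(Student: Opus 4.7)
The plan is to establish $k$-automaticity of $(y_n)$ by showing its $k$-kernel is finite. Adjoin a formal identity $1$ to form the finite monoid $\mathcal{A}^{\#}:=\mathcal{A}\cup\{1\}$ and set $y_0:=1$ so that the recurrence $y_{n+1}=x_n*y_n$ holds uniformly for all $n\ge 0$.

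The first step is a block decomposition. For any $i\ge 0$ and $0\le j<k^i$, associativity of $*$ yields
\[
y_{k^{i}n+j} \;=\; \bigl(x_{k^{i}n+j-1}*x_{k^{i}n+j-2}*\cdots*x_{k^{i}n}\bigr)*y_{k^{i}n},
\]
where the parenthesized prefix is a pointwise function of the $k$-automatic subsequences $(x_{k^{i}n+\ell})_{n\ge 0}$ for $0\le\ell<j$, hence $k$-automatic in $n$ by Lemma~\ref{lem:auto-prop}(3) and (4). Splitting $[0,k^{i}n-1]$ into $n$ consecutive length-$k^i$ blocks gives
\[
y_{k^{i}n} \;=\; B^{(i)}_{n-1}*B^{(i)}_{n-2}*\cdots*B^{(i)}_{0},\qquad B^{(i)}_m := x_{k^{i}(m+1)-1}*\cdots*x_{k^{i}m},
\]
and each $(B^{(i)}_m)_{m\ge 0}$ is $k$-automatic. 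A useful refinement: letting $M=(Q,\delta,q_0,\tau)$ be a DFAO computing $(x_n)$ with MSD-first input and writing $q_v$ for the state reached from $q_0$ on the base-$k$ representation of $v$, the block $B^{(i)}_v$ depends only on $q_v$, so $B^{(i)}_v=\beta^{(i)}(q_v)$ for some map $\beta^{(i)}\colon Q\to\mathcal{A}^{\#}$ drawn from a finite set.

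The main obstacle is that $(y_{k^{i}n})_{n\ge 0}$ is itself a running $*$-product of a $k$-automatic $\mathcal{A}^{\#}$-valued sequence, so the decomposition is self-referential and a naive induction loops. To close the argument one exploits the finiteness of all ambient structures simultaneously. In the finite transformation monoid $\mathcal{T}\subseteq(\mathcal{A}^{\#})^{\mathcal{A}^{\#}}$ generated by the left-multiplications $\{L_a:a\in\mathcal{A}^{\#}\}$, sufficiently long composites eventually land in the minimal ideal, whose $\mathcal{H}$-classes are finite groups of bounded exponent. Combined with the state-only dependence $B^{(i)}_v=\beta^{(i)}(q_v)$ on the $k$-automatic state trajectory of $M$, an induction on the $\mathcal{J}$-depth of $\mathcal{T}$ forces only finitely many distinct composites $L_{B^{(i)}_{n-1}}\circ\cdots\circ L_{B^{(i)}_0}$ to occur as $(i,n)$ vary, yielding a finite $k$-kernel for $(y_n)$. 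This semigroup-structural pigeonhole, applied simultaneously to the finite state set of $M$ and the finite transformation monoid of $\mathcal{A}^{\#}$, is the technical heart of the proof.
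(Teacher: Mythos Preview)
The paper does not prove this lemma at all; it is quoted as Theorem~2 of \cite{AF86} and used as a black box. So there is no ``paper's proof'' to compare against, and your proposal must stand on its own.

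Your setup and block decomposition are correct: the splitting $y_{k^{i}n+j}=P_{i,j}(q_n)\ast y_{k^{i}n}$ with $y_{k^{i}n}=\beta^{(i)}(q_{n-1})\ast\cdots\ast\beta^{(i)}(q_0)$ is exactly the right shape, and the observation that $B^{(i)}_v$ depends only on the MSD-state $q_v$ is valid (modulo the usual leading-zero convention). You also correctly diagnose the difficulty: the decomposition is self-referential, so a naive induction on $i$ does not terminate.

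The gap is in your proposed resolution. You assert that ``an induction on the $\mathcal{J}$-depth of $\mathcal{T}$ forces only finitely many distinct composites $L_{B^{(i)}_{n-1}}\circ\cdots\circ L_{B^{(i)}_0}$ to occur as $(i,n)$ vary, yielding a finite $k$-kernel for $(y_n)$.'' But the first clause is vacuous: $\mathcal{T}$ is finite, so of course only finitely many composites occur as \emph{values}. What you need is that only finitely many \emph{sequences} $n\mapsto y_{k^{i}n+j}$ arise as $(i,j)$ vary, and nothing in your Green's-relations sketch addresses that. The invocation of the minimal ideal and bounded-exponent $\mathcal{H}$-classes is not connected to any statement about the $n$-dependence of these products; for instance, when $\mathcal{A}^{\#}$ is already a finite group the minimal ideal is the whole group and your structural observations become empty, yet the lemma is still nontrivial.

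What actually closes the loop is a fact you nearly stated but did not use: the map $\beta^{(i)}\mapsto\beta^{(i+1)}$ is a fixed self-map of the finite set $(\mathcal{A}^{\#})^{Q}$, namely $\beta^{(i+1)}(q)=\beta^{(i)}(\delta(q,k-1))\ast\cdots\ast\beta^{(i)}(\delta(q,0))$. Hence $(\beta^{(i)})_{i\ge 0}$ is eventually periodic. Replacing $k$ by a suitable power $k^{m}$ (which preserves $k$-automaticity) one can arrange $\beta^{(i)}$ to be constant for $i\ge 1$, whence all the running-product sequences $(y_{k^{i}n})_{n}$ coincide for $i\ge 1$, and the $k$-kernel is contained in the finite set $\{\,n\mapsto\gamma(q_n)\ast y_{kn}:\gamma\in(\mathcal{A}^{\#})^{Q}\,\}$ together with the finitely many sequences for $i=0$. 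That is the missing step; the semigroup structure theory is a red herring.
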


    The paperfolding sequence and the Rudin-Shapiro sequence are two well known \(2\)-automatic sequences. 
    
    \medskip
    \textbf{Paperfolding sequence.} The paperfolding sequence \(\mathbf{p}=(p_n)_{n\ge 0}\) on the alphabet \(\{-1,1\}\) is defined as follows: \(p_0=1\) and for all \(n\ge 1\), 
    \[\begin{cases}
        p_{4n}=1,\\
        p_{4n+2}=-1,\\
        p_{2n+1}=p_{n}.
    \end{cases}\]
    The sequence can also be generated by using the substitution \[\sigma: a\to ab,\ b\to cb,\ c\to ad,\ d\to cd\] and the projection \(\rho: a\to 1,\, b\to 1,\ c\to -1,\ d\to -1\). That is \(\mathbf{p}=\lim_{n\to\infty}\rho(\sigma^{n}(a))\).

    \medskip
    \textbf{Rudin-Shapiro sequence.} The Rudin-Shapiro sequence \(\mathbf{r}=(r_{n})_{n\ge 0}\) on the alphabet \(\left\{-1,1\right\}\) is defined as follows: \(r_0=1\) and for all \(n\ge 1\), 
    \[\begin{cases}
        r_{2n}=r_{n},\\
        r_{2n+1}=(-1)^{n}r_{n}.
    \end{cases}\]
    The sequence can also be generated by using the substitution \[\sigma_{rs}: a\to ab,\ b\to ac,\ c\to db,\ d\to dc\] and the projection \(\rho_{rs}: a\to 1,\, b\to 1,\ c\to -1,\ d\to -1\). That is \(\mathbf{r}=\lim_{n\to\infty}\rho_{rs}(\sigma_{rs}^{n}(a))\).

    \medskip
    \textbf{Hankel determinant.}    
    Let \(\mathbf{b}=(b_{n})_{n\ge 0}\) be an integer sequence. Then, for all \(n\ge 1\), the \(n\)th-order Hankel determinant of \(\mathbf{b}\) is 
    \[H_{n}(\mathbf{b}) : = \det(b_{i+j-2})_{1\leq i,j\leq n}=\begin{vmatrix}
        b_{0} & b_{1} & \dots & b_{n-1}\\
        b_{1} & b_{2} & \dots & b_{n}\\
        \vdots & \vdots & \ddots & \vdots\\
        b_{n-1} & b_{n} & \dots & b_{2n-2}
    \end{vmatrix}.\]
    Heilermann \cite{Hei46} gave the nice connection between the Stieltjes continued fraction and its Hankel determinant: for all \(n\ge 1\), 
    \begin{equation}
        H_{n}(\mathrm{Stiel}_{\mathbf{a}}) = a_{0}^{n}(a_{1}a_{2})^{n-1}(a_{3}a_{4})^{n-2}\dots (a_{2n-3}a_{2n-2}).\label{eq:heil}
    \end{equation}
    The Hankel determinants are expressed in the twice running product of \(\mathbf{a}\). Namely, letting \(b_{n}=\prod_{i=0}^{2n}a_{i}\) for all \(n\ge 0\), then \(H_{n}(\mathrm{Stiel}_{\mathbf{a}})=\prod_{i=0}^{n-1}b_{i}\). 

    When \(\mathbf{a}\in\{-1,1\}^{\infty}\) is \(p\)-automatic, according to Lemma \ref{lem:running}, the running product sequence \((\prod_{i=0}^{n}a_{i})_{n\ge 0}\) is \(p\)-automatic. By Lemma \ref{lem:auto-prop} (4), its subsequence \((\prod_{i=0}^{2n}a_i)_{n\ge 0}\) is also \(p\)-automatic. Using Lemma \ref{lem:running} again, one can see that \((H_{n}(\mathrm{Stiel}_{\mathbf{a}}))_{n\ge 1}\) is a \(p\)-automatic sequence on the alphabet \(\{-1,1\}\). Further, if \(\mathbf{a}\) is a \(p\)-automatic sequence taking values in \(\mathbb{Z}\backslash\{0\}\),  then for any integer \(m\ge 2\),
    \begin{itemize}
        \item the sequence \((H_{n}(\mathrm{Stiel}_{\mathbf{a}})\mod m)_{n\ge 1}\) is \(p\)-automatic.
    \end{itemize} 

    \emph{Notations}. We define for \(n\ge 0\),
    \[S_{n}(x)=\sum_{i=0}^{n}x^{2^{i}},\quad S_{n}^{e}(x)=\sum_{i=0}^{n}x^{2^{2i}},\quad S_{n}^{o}(x)=\sum_{i=0}^{n}x^{2^{2i+1}}\]
    and for \(n\ge 2\), \[\quad T_{n}(x)=\sum_{i=3}^{n}\sum_{k=2}^{i-1}x^{2^{i}+2^{k}}\]
    where \(T_2(x)=0\).

    Throughout the paper, we denote by `\(\equiv_{m}\)' the congruence modulo \(m\), where \(m\ge 2\) is an integer.
    
    We record several useful equalities in the following:
    \begin{align}
        2S_{n}^{2}(x) & \equiv_{4} 2S_{n}(x^{2})\nonumber\\  
        & \equiv_{4} 2(S_{n+1}(x)-x),\label{eq:2s2}\\ 
        2S_{n-1}(x)S_{n}(x) & \equiv_{4} 2(S_{n}^2(x)-x^{2^{n}}S_{n}(x)) \nonumber\\
        & \equiv_{4} 2(S_{n+1}(x)-x)+2x^{2^{n}}S_{n}(x),\label{eq:ss}\\
        2T_{n}(x) & \equiv_{4} 2\sum_{i=3}^{n}\sum_{k=2}^{i-1}x^{2^{i}+2^{k}} \nonumber\\
        & \equiv_{4} 2\sum_{i=3}^{n-1}\sum_{k=2}^{i-1}x^{2^{i}+2^{k}} + 2 \sum_{k=2}^{n-1}x^{2^{n}+2^{k}}\nonumber\\
        & \equiv_{4} 2T_{n-1}(x)+2x^{2^{n}}(S_{n-1}(x)-x-x^{2}),\label{eq:t}\\
        S_{n}^{2}(x) & \equiv_{4} \sum_{i=0}^{n}\sum_{k=0}^{n}x^{2^{i}+2^{k}} \equiv_{4} 2\sum_{i=1}^{n}\sum_{k=0}^{i-1}x^{2^{i}+2^{k}}+\sum_{i=0}^{n}x^{2^{i+1}}\nonumber\\
        & \equiv_{4} (3x+2x^{2}+2x^{3}+2x^{4}) + 2(x+x^{2})S_{n}(x) + S_{n+1}(x) + 2T_{n}(x).\label{eq:s2}
    \end{align}
    Write \(S_{\infty}(x):=\lim\limits_{n\to\infty}S_{n}(x)\), and similarly we define \(S_{\infty}^{e}(x)\), \(S_{\infty}^{o}(x)\) and \(T_{\infty}(x)\).

    \section{Coefficients of convergents}\label{sec:coeff}
    Let \(\mathbf{c}=(c_n)_{n\ge 0}\) be an infinite sequence on the alphabet \(\{-1,1\}\). For all \(n\ge 1\), the \(n\)th convergent of \(\text{Stiel}_{\mathbf{c}}(x)\) is written by \[\frac{P^{\mathbf{c}}_n(x)}{Q^{\mathbf{c}}_{n}(x)}:=\text{Stiel}_{\mathbf{c}|_{n}}(x)\] where \(P^{\mathbf{c}}_{n}(x)\) and \(Q^{\mathbf{c}}_{n}(x)\) are co-prime polynomials in \(\mathbb{Z}[x]\). In addition, we define \(P^{\mathbf{c}}_{0}(x)=c_{0}x\) and \(Q^{\mathbf{c}}_{0}(x)=1\). For simplicity, we shall use \(P_{n}(x)\) and \(Q_{n}(x)\) instead of \(P^{\mathbf{c}}_{n}(x)\) and \(Q^{\mathbf{c}}_{n}(x)\). This will not cause any misunderstanding, since we focus on one sequence at a time in different sections.

    A basic relation between consecutive convergents is that for all \(n\ge 1\),
    \begin{equation}\label{eq:convergents}
        \begin{pmatrix}
            P_{n-1}(x) & P_{n}(x)\\ Q_{n-1}(x) & Q_{n}(x)
        \end{pmatrix} = 
        \begin{pmatrix}
            0 & c_{0}x\\ 1 & 1
        \end{pmatrix}
        \begin{pmatrix}
            0 & c_{1}x\\ 1 & 1
        \end{pmatrix}\dots
        \begin{pmatrix}
            0 & c_{n}x\\ 1 & 1
        \end{pmatrix};
    \end{equation}
    see for example \cite{EW11, Wall48}. The sequences of polynomials \((Q_{n}(x))_{n\ge 1}\) and \((P_{n}(x))_{n\ge 1}\) share the same recurrence relation for \(n\ge 2\),
    \begin{equation}
        F_n(x) = F_{n-1}(x) + c_{n}xF_{n-2}(x), \label{eq:f-rec}
    \end{equation} 
    where \(F\) stands for \(P\) and \(Q\).

    Given the initial condition \(F_0(x),\, F_{1}(x)\in\mathbb{Z}[x]\), we investigate the sequence of polynomials \((F_{n}(x))_{n\ge 0}\) satisfying the recurrence relation \eqref{eq:f-rec}. 
    For all \(n\ge 0\), write 
    \begin{equation}
        F_{n}(x) = \sum_{i\ge 0}a_{n,i}x^{i} \label{eq:ani}
    \end{equation}
    where \(a_{n,i}\in\mathbb{Z}\) for all \(i\ge 0\). In this way, the sequence \((F_{n}(x))_{n\ge 0}\) defines the two-dimensional sequence \((a_{n,i})_{n,i\ge 0}\) taking values in \(\mathbb{Z}\). Note that \(a_{n,i}=0\) for all \(i>\deg(F_{n})\). For all \(n\ge 0\), the sequence \((a_{n,i})_{i\ge 0}\) is eventually constant.

    \begin{proposition}\label{prop:ani}
        Let  \((F_{n}(x))_{n\ge 0}\) be a sequence of polynomials satisfying the recurrence relation \eqref{eq:f-rec} and \((a_{n,i})_{n,i\ge 0}\) is defined by \eqref{eq:ani}. If the sequence \(\mathbf{c}\) is \(p\)-automatic, then for all \(i\ge 0\), the sequence \((a_{n,i})_{n\ge 0}\) modulo \(m\) \((m\ge 2)\) is \(p\)-automatic.
    \end{proposition}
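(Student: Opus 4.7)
\medskip

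\textbf{Plan.} The strategy is an induction on the coefficient index \(i\), using the fact that the polynomial recurrence \eqref{eq:f-rec} translates to a coefficient recurrence that is essentially a running sum.

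First I would extract the coefficient-level recurrence. Substituting \(F_{n}(x)=\sum_{i\ge 0}a_{n,i}x^{i}\) into \(F_{n}(x)=F_{n-1}(x)+c_{n}xF_{n-2}(x)\) and comparing coefficients of \(x^{i}\) gives, for all \(n\ge 2\) and \(i\ge 0\), the identity
\[
a_{n,i}=a_{n-1,i}+c_{n}\,a_{n-2,\,i-1},
\]
with the convention \(a_{n,-1}=0\). Telescoping in \(n\) yields, for all \(n\ge 1\),
\[
a_{n,i}\equiv_{m} a_{1,i}+\sum_{k=2}^{n}c_{k}\,a_{k-2,\,i-1}.
\]
This identity is the engine of the induction: fixing \(i\), the sequence \((a_{n,i}\bmod m)_{n\ge 0}\) is (up to a finite initial segment and an additive constant) the running sum modulo \(m\) of the sequence \((c_{k}\,a_{k-2,i-1}\bmod m)_{k\ge 2}\).

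The base case \(i=0\) is immediate: since \(a_{n,-1}=0\), the recurrence gives \(a_{n,0}=a_{n-1,0}\) for \(n\ge 2\), so \((a_{n,0}\bmod m)_{n\ge 0}\) is eventually constant and thus trivially \(p\)-automatic. For the inductive step, assume \((a_{n,i-1}\bmod m)_{n\ge 0}\) is \(p\)-automatic. By Lemma~\ref{lem:auto-prop}(4) the shifted sequence \((a_{k-2,i-1}\bmod m)_{k\ge 2}\) is \(p\)-automatic, and since \(\mathbf{c}\) is \(p\)-automatic with values in the finite set \(\{-1,1\}\), Lemma~\ref{lem:auto-prop}(3) applied to the multiplication map \(\mathbb{Z}/m\mathbb{Z}\times\{-1,1\}\to\mathbb{Z}/m\mathbb{Z}\) shows that \((c_{k}\,a_{k-2,i-1}\bmod m)_{k\ge 2}\) is \(p\)-automatic. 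Now apply Lemma~\ref{lem:running} with the alphabet \(\mathbb{Z}/m\mathbb{Z}\) and the associative operation of addition modulo \(m\): the running-sum sequence is \(p\)-automatic. Adding the constant \(a_{1,i}\bmod m\) (another application of Lemma~\ref{lem:auto-prop}(3)) and adjusting the finitely many initial terms via Lemma~\ref{lem:auto-prop}(1) yields that \((a_{n,i}\bmod m)_{n\ge 0}\) is \(p\)-automatic, closing the induction.

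I do not expect a real obstacle here; the only point that needs care is the reduction from the ring \(\mathbb{Z}\) to the finite alphabet \(\mathbb{Z}/m\mathbb{Z}\), because Lemmas~\ref{lem:auto-prop} and \ref{lem:running} require values in a finite set. Reducing modulo \(m\) at every stage and interpreting the coefficient recurrence in \(\mathbb{Z}/m\mathbb{Z}\) handles this cleanly. Apart from that, the argument is a direct interplay between the coefficient recurrence and the closure properties of \(p\)-automatic sequences under codings, Cartesian products, shifts, and running products/sums recorded in the preliminaries.
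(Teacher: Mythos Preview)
Your proposal is correct and follows essentially the same route as the paper: derive the coefficient recurrence \(a_{n,i}=a_{n-1,i}+c_n a_{n-2,i-1}\), telescope it into a running sum, and induct on \(i\) using the closure properties in Lemmas~\ref{lem:auto-prop} and \ref{lem:running}. The only cosmetic differences are that the paper telescopes down to \(a_{2,i}\) (summing from \(j=3\)) rather than \(a_{1,i}\), and that your appeal to Lemma~\ref{lem:auto-prop}(4) for the ``shift'' \((a_{k-2,i-1})_{k\ge 2}\) is unnecessary since that sequence is literally \((a_{n,i-1})_{n\ge 0}\).
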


    \begin{proof}
        From \eqref{eq:f-rec}, for all \(n\ge 2\), one has \(a_{n,0} = a_{n-1,0}\) and for all \(i\ge 1\), 
        \begin{equation}
            a_{n,i} = a_{n-1,i} + c_{n}a_{n-2,i-1}. \label{eq:a-rec}
        \end{equation}
        Therefore, \(a_{n,0}=a_{1,0}\) for all \(n\ge 0\). This implies that \((a_{n,0})_{n\ge 0}\) is \(p\)-automatic. 
        
        Using \eqref{eq:a-rec} \(n-2\) times, we have  
        \begin{align*}
            a_{n,1} & = a_{n-1,1} + c_{n}a_{n-2,0} = a_{n-1,1} + c_{n}a_{1,0},\\
            a_{n-1,1} & = a_{n-2,1}+c_{n-1}a_{n-3,0} = a_{n-2,1} + c_{n-1}a_{1,0},\\
            & \dots\\
            a_{3,1} & = a_{2,1} + c_{3}a_{1,0}.
        \end{align*}
        Adding them up, we obtain that \(a_{n,1} = a_{2,1} + a_{1,0}\sum_{j=3}^{n}c_{j}\). According to Lemma \ref{lem:running}, we obtain that \((\sum_{j=3}^{n}c_{j}\ \mathrm{mod}\ m)_{n\ge 3}\)  is \(p\)-automatic. Therefore,  \((a_{n,1}\ \mathrm{mod}\ m)_{n\ge 3}\) is \(p\)-automatic and so is  \((a_{n,1}\ \mathrm{mod}\  m)_{n\ge 0}\).
    
        Now suppose \((a_{n,i}\ \mathrm{mod}\ m)_{n\ge 0}\) (\(i\ge 1\)) is \(p\)-automatic. We show that \((a_{n,i+1}\ \mathrm{mod}\ m)_{n\ge 0}\) is also \(p\)-automatic. Using \eqref{eq:a-rec} as previously, we have 
        \[a_{n,i+1} = a_{2,i+1}+\sum_{j=3}^{n}c_{j}a_{j-2,i}.\]
        Note that \(\mathbf{c}\) is \(p\)-automatic and by the inductive hypothesis, we see that \((a_{n,i}\ \mathrm{mod}\ m)_{n\ge 0}\) is \(p\)-automatic too. Then by Lemma \ref{lem:auto-prop} (3), their product \((c_{j}a_{j-2,i}\ \mathrm{mod}\ m)_{j\geq 3}\) is also \(p\)-automatic. Then by Lemma \ref{lem:running}, the running sum sequence \((\sum_{j=3}^{n}c_{j}a_{j-2,i}\ \mathrm{mod}\ m)_{n\geq 3}\) is \(p\)-automatic. According to Lemma \ref{lem:auto-prop} (1) and (2), the sequence \((a_{n,i+1}\ \mathrm{mod}\ m)_{n\ge 0}\) is \(p\)-automatic.
    \end{proof}
    
    In the following, we focus on the two-dimensional coefficient sequences of \((Q_{n}(x))_{n\ge 1}\) and \((P_{n}(x))_{n\ge 1}\).
    \subsection{Visualization of \((a_{n,i})_{n,i\ge 0}\)}
     Suppose that the two-dimensional sequence \((a_{n,i})_{n,i\ge 0}\) is given by \((Q_{n}(x))_{n\ge 1}\) (or \((P_{n}(x))_{n\ge 1}\)) as in \eqref{eq:ani}.  Then we have  
    \begin{itemize}
        \item[-] for all \(n\ge 0\), the sequence \((a_{n,i})_{i\ge 0}\) is eventually zero.
    \end{itemize}
    Moreover, by Proposition \ref{prop:ani}, we see that for any \(m\ge 2\),
    \begin{itemize}
        \item[-] for all \(i\ge 0\), the sequence \((a_{n,i}\mod m)_{n\ge 0}\) is \(p\)-automatic.
    \end{itemize}
    
    The two-dimensional sequence \((a_{n,i}\mod m)_{n,i\ge 0}\) also presents a kind of self-similar property. We visualize the two-dimensional sequence \((a_{n,i})_{n\ge 1,i\ge 0}\) in the following way: 
    \begin{itemize}
        \item if \(a_{n,i}=1\) (resp. \(2\), \(3\)), then we plot a red (resp. green, blue) square at the position \((n,i)\);
        \item if \(a_{n,i}=1\), then we plot a white square at the position \((n,i)\).
    \end{itemize}
    Namely, a square in white (resp. red, green, blue) at the position \((n,i)\) indicates that the coefficient modulo \(4\) of the term \(x^{i}\) in \(Q_n(x)\) is  \(0\) (resp. \(1,\,2,\,3\)). Figure \ref{fig:paperfolding} and Figure \ref{fig:rudin-shapiro} illustrate the two dimensional coefficient sequences (modulo \(4\)) of \((Q_{n}(x))_{n\ge 1}\) and \((P_{n}(x))_{n\ge 1}\) for the paperfolding sequence and the Rudin-Shapiro sequence respectively.
    \begin{figure}[htbp]
        \centering
        \begin{subfigure}[t]{.49\textwidth}
            \centering
            \includegraphics[width=\textwidth]{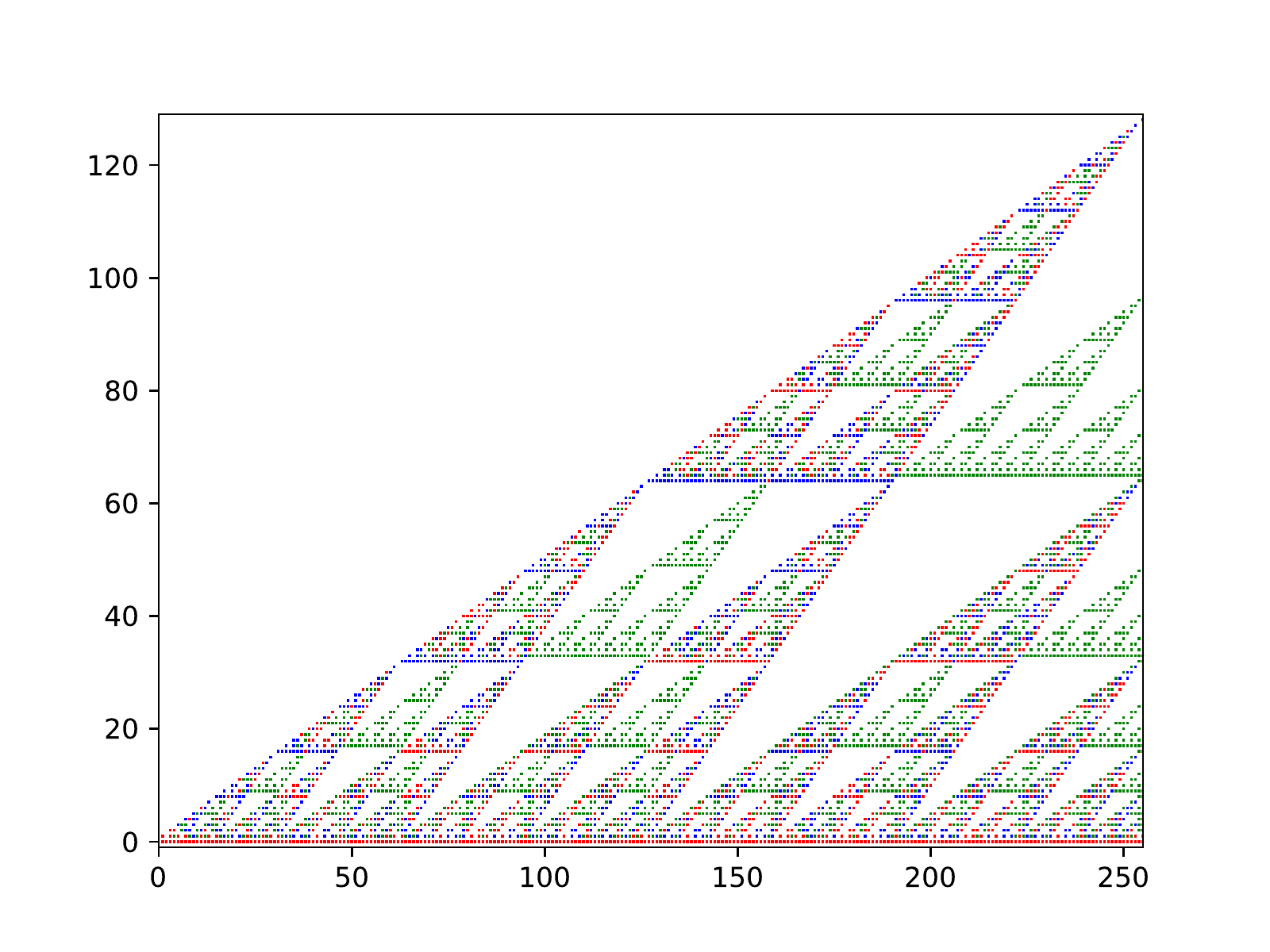}
            \caption{Visualization of \((Q^{\mathbf{p}}_{n}(x))_{n=1}^{256}\) modulo \(4\).}
        \end{subfigure}
        \begin{subfigure}[t]{.49\textwidth}
            \centering
            \includegraphics[width=\textwidth]{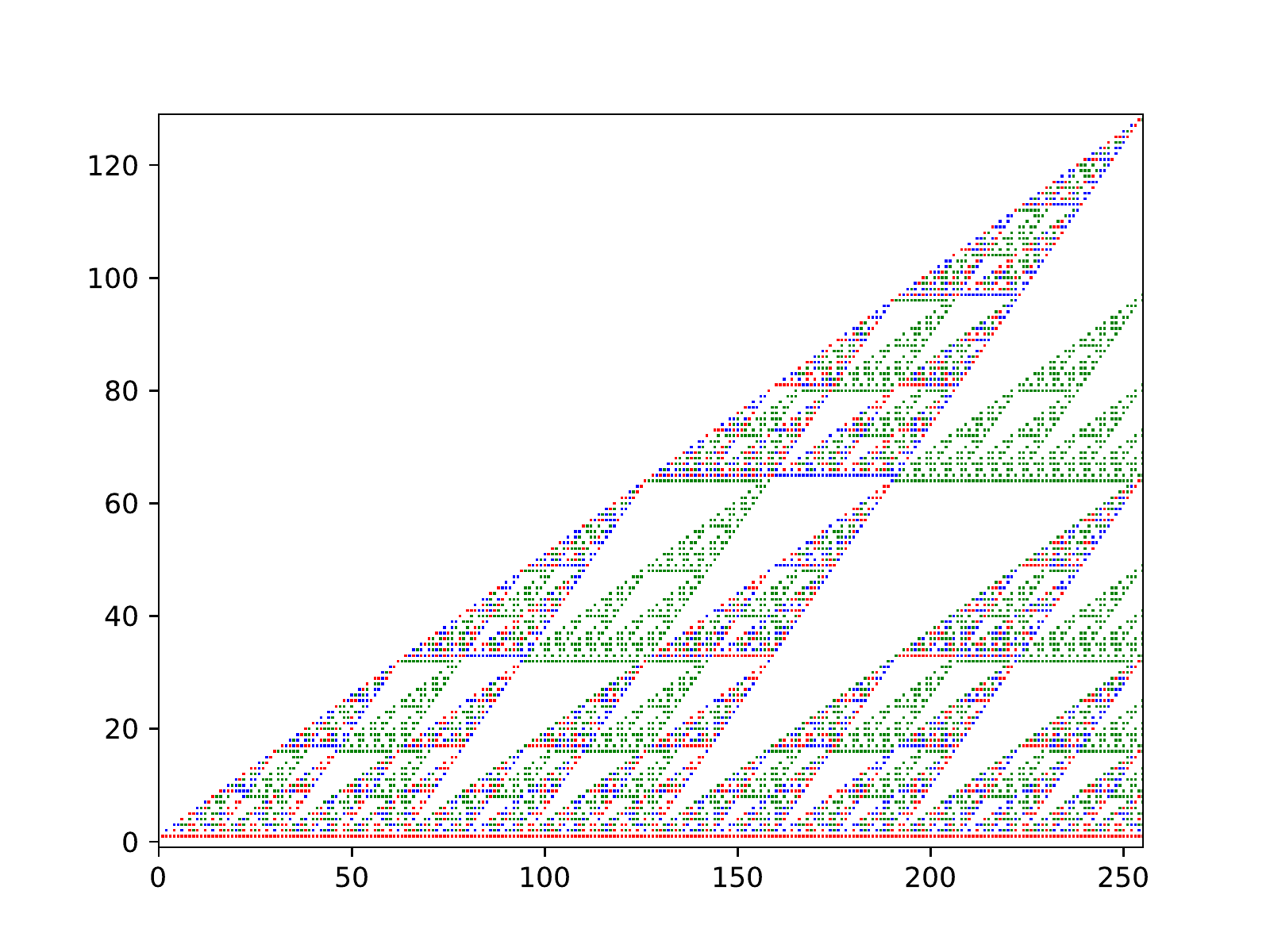}
            \caption{Visualization of \((P^{\mathbf{p}}_{n}(x))_{n=1}^{256}\) modulo \(4\).}
        \end{subfigure}
        \caption{Paperfolding sequence}
        \label{fig:paperfolding}
    \end{figure}

    \begin{figure}[htbp]
        \centering
        \begin{subfigure}[t]{.49\textwidth}
            \centering
            \includegraphics[width=\textwidth]{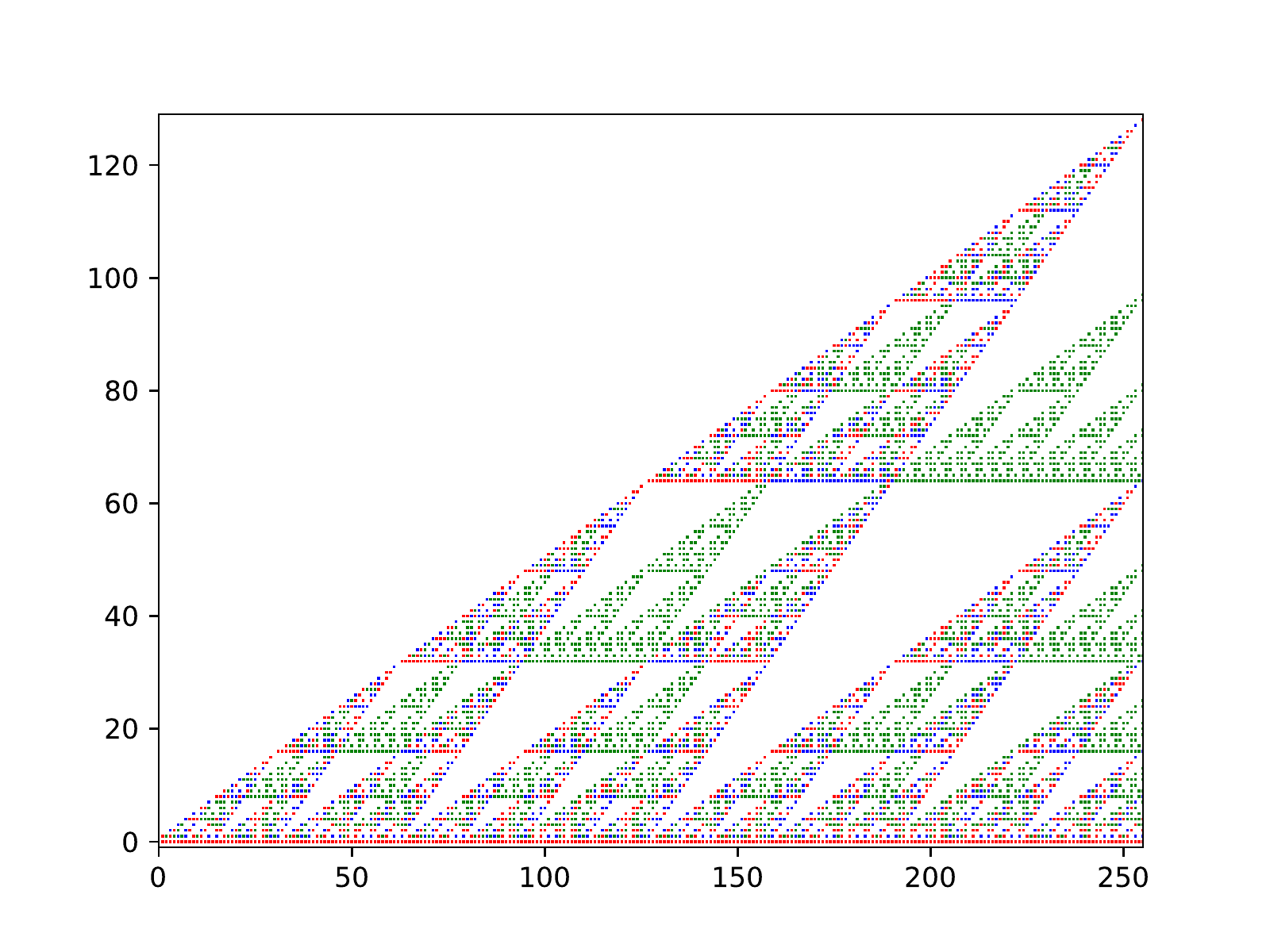}
            \caption{Visualization of \((Q^{\mathbf{r}}_{n}(x))_{n=1}^{256}\) modulo \(4\).}
        \end{subfigure}
        \begin{subfigure}[t]{.49\textwidth}
            \centering
            \includegraphics[width=\textwidth]{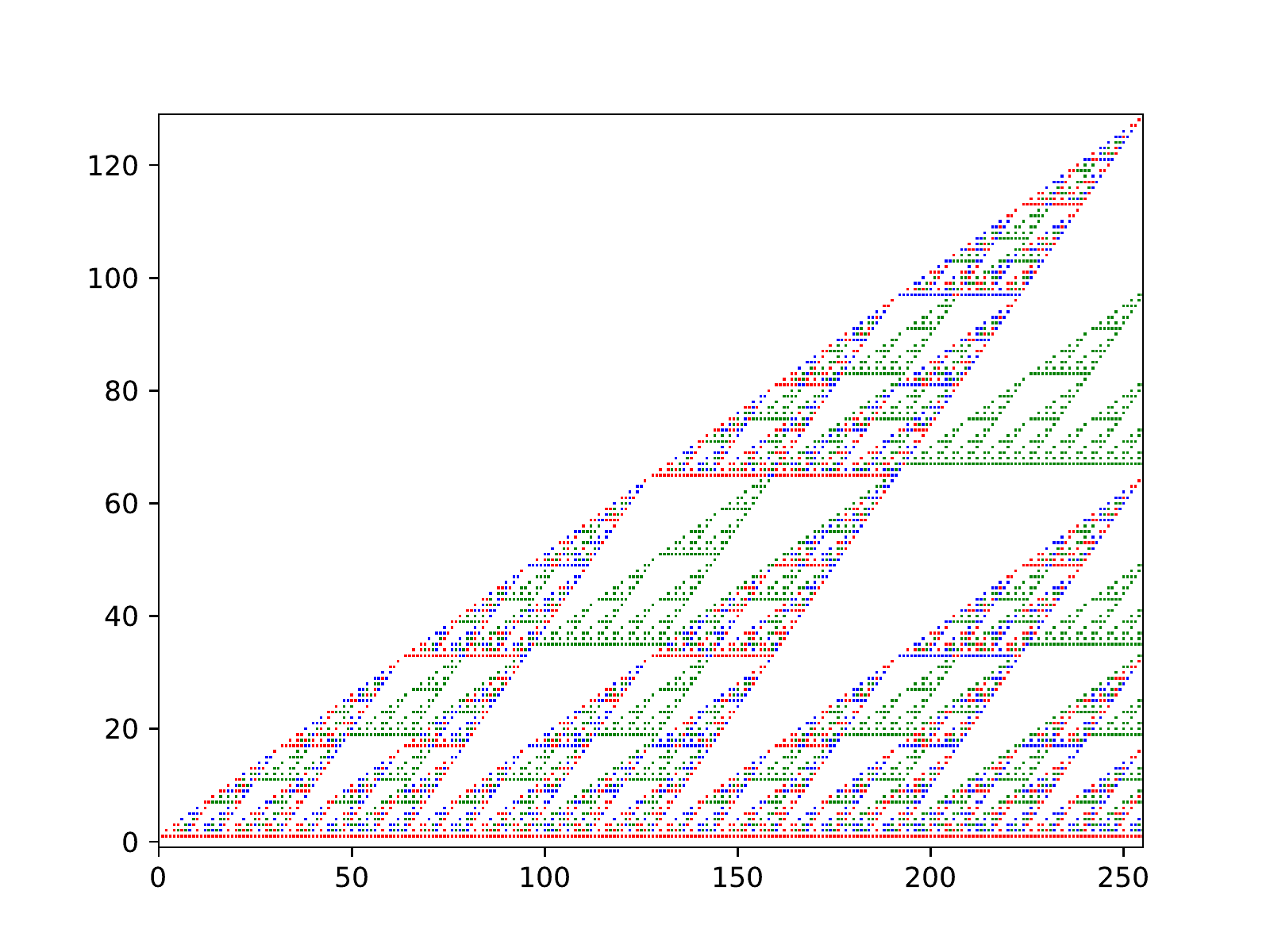}
            \caption{Visualization of \((P^{\mathbf{r}}_{n}(x))_{n=1}^{256}\) modulo \(4\).}
        \end{subfigure}
        \caption{Rudin-Shapiro sequence}
        \label{fig:rudin-shapiro}
    \end{figure}

    \begin{remark}
        The white triangles in Figure \ref{fig:paperfolding} and \ref{fig:rudin-shapiro} represent the block of zeros of \((a_{n,i})_{n\ge 1}\) while modulo \(4\). The occurrence of the triangular shape can be explained by \eqref{eq:a-rec}. Suppose \(a_{n,i}\equiv_4 c\in\mathbb{Z}/4\mathbb{Z}\) for \(n_1\leq n\leq n_1+n_0\). That is we have a horizontal line segment of length \(n_0+1\) in only one color in the picture. Then \(p_{n}a_{n-2,i-1}=a_{n,i}-a_{n-1,i}\equiv_4 0\). Hence \(a_{n,i-1}\equiv_4 0\) for \(n_1-1\leq n\leq n_1+n_0-2\). Namely, below the previous line segment, we have a white horizontal line segment of length \(n_0\). Note that the length of the new line segment shrinks by \(1\). Repeating this process \(n_0\) times, we finally obtain a line segment of length \(1\), i.e., a point in white. Those \(n_0\) line segments form a triangle in white.
    \end{remark}

    \section{Paperfolding sequence and its Stieltjes continued fraction}\label{sec:paper}
    Let \(\mathbf{p}\) be the paperfolding sequence on the alphabet \(\{-1,1\}\). Recall that \[\mathbf{p}=\lim\limits_{n\to\infty}\rho(\sigma^{n}(a))\] where \[\sigma: a\to ab,\ b\to cb,\ c\to ad,\ d\to cd\quad\text{and}\quad \rho: a\to 1,\, b\to 1,\ c\to -1,\ d\to -1.\]
    For all \(n\ge 1\), let \(P_n(x)/Q_{n}(x):=\text{Stiel}_{\mathbf{p}|_{n}}(x)\) be the \(n\)th convergent of \(\text{Stiel}_{\mathbf{p}}(x)\), where \(P_{n}(x)\) and \(Q_{n}(x)\) are co-prime polynomials in \(\mathbb{Z}[x]\). In addition, we define \(P_{0}(x)=p_{0}x\) and \(Q_{0}(x)=1\).  
    For our purpose, we introduce the other two sequences of polynomials
    \(P_{j}^{b}(x)\) and \(Q_{j}^{b}(x)\) for \(j\in\{2^{n}-1\mid n\ge 2\}\cup\{2^{n}-2\mid n\ge 2\}\).  For \(n\ge 2\), set
    \begin{equation*}
        \begin{pmatrix}
            P_{2^{n}-2}^{b}(x) & P_{2^{n}-1}^{b}(x)\\ 
            Q_{2^{n}-2}^{b}(x) & Q_{2^{n}-1}^{b}(x)
        \end{pmatrix} := 
        \begin{pmatrix}
            0 & p_{2^{n}}x\\ 1 & 1
        \end{pmatrix}
        \begin{pmatrix}
            0 & p_{2^{n}+1}x\\ 1 & 1
        \end{pmatrix}\cdots
        \begin{pmatrix}
            0 & p_{2^{n+1}-1}x\\ 1 & 1
        \end{pmatrix}.
    \end{equation*}
    In fact, \(P_{2^{n}-1}^{b}(x)/Q_{2^{n}-1}^{b}(x)=\text{Stiel}_{\rho(\sigma^{n}(b))}(x)\).

    The next result allows us to describe a subsequence of convergents.
    \begin{lemma}
        For all \(n\ge 2\) and \(\ell = 1 \) or \(2\), we have 
        \begin{align}
            P_{2^{n}-\ell}(x) & = P_{2^{n-1}-2}(x)P^{b}_{2^{n-1}-\ell}(x) + P_{2^{n-1}-1}(x)Q^{b}_{2^{n-1}-\ell}(x),\label{eq:p-1}\\
            Q_{2^{n}-\ell}(x) & = Q_{2^{n-1}-2}(x)P^{b}_{2^{n-1}-\ell}(x) + Q_{2^{n-1}-1}(x)Q^{b}_{2^{n-1}-\ell}(x),\label{eq:q-1}\\
            P^{b}_{2^{n}-\ell}(x) & = P_{2^{n}-\ell}(x) + Q^{b}_{2^{n-1}-\ell}(x)\left(2P_{2^{n-1}-2}(x) - 2P_{2^{n-1}-1}(x)\right),\label{eq:pb-1}\\
            Q^{b}_{2^{n}-\ell}(x) & = Q_{2^{n}-\ell}(x) + Q^{b}_{2^{n-1}-\ell}(x)\left(2Q_{2^{n-1}-2}(x) - 2Q_{2^{n-1}-1}(x)\right).\label{eq:qb-1}
        \end{align}
    \end{lemma}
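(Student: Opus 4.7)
The plan is to derive all four identities from the matrix factorization \eqref{eq:convergents}. For \eqref{eq:p-1} and \eqref{eq:q-1}, I would split the product $\prod_{i=0}^{2^n-1}A_i$, where $A_i:=\bigl(\begin{smallmatrix} 0 & p_i x \\ 1 & 1 \end{smallmatrix}\bigr)$, at the middle index $i=2^{n-1}$. The left factor is, by \eqref{eq:convergents} applied at index $2^{n-1}-1$, the matrix whose columns are $(P_{2^{n-1}-\ell},Q_{2^{n-1}-\ell})^{\top}$ for $\ell=2,1$. For the right factor, the identity $\sigma^n(a)=\sigma^{n-1}(a)\sigma^{n-1}(b)$ yields $p_{2^{n-1}+j}=\rho(\sigma^{n-1}(b))_{j}$ for $0\le j<2^{n-1}$, so the definition of $P^{b},Q^{b}$ applied at level $n-1$ identifies the right factor as the matrix whose columns are $(P^{b}_{2^{n-1}-\ell},Q^{b}_{2^{n-1}-\ell})^{\top}$. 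Reading off the columns of the resulting $2\times 2$ product then yields \eqref{eq:p-1} and \eqref{eq:q-1}.

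For \eqref{eq:pb-1} and \eqref{eq:qb-1}, the main combinatorial input is the claim that the two length-$2^n$ words $\rho(\sigma^n(a))$ and $\rho(\sigma^n(b))$ agree at every position except $i=2^{n-1}-1$, where the former equals $+1$ and the latter equals $-1$. I would prove this by a simultaneous induction on $n$, coupled with the auxiliary statements that $\sigma^n(a),\sigma^n(c)$ agree except in their last letter ($b$ versus $d$) and $\sigma^n(b),\sigma^n(d)$ agree except in their last letter ($b$ versus $d$). The inductive step is automatic from the four substitution rules $\sigma^{n+1}(a)=\sigma^n(a)\sigma^n(b)$, $\sigma^{n+1}(c)=\sigma^n(a)\sigma^n(d)$, $\sigma^{n+1}(b)=\sigma^n(c)\sigma^n(b)$, $\sigma^{n+1}(d)=\sigma^n(c)\sigma^n(d)$. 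Applied to $\sigma^n(a)$ versus $\sigma^n(b)$, the coinciding second halves $\sigma^{n-1}(b)$ leave the discrepancy to $\sigma^{n-1}(a)$ versus $\sigma^{n-1}(c)$, which sits precisely at position $2^{n-1}-1$ of the full word.

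Granting this, set $B_i:=\bigl(\begin{smallmatrix} 0 & p_{2^n+i}x \\ 1 & 1 \end{smallmatrix}\bigr)$. Then $B_i=A_i$ for all $0\le i<2^n$ except $i=2^{n-1}-1$, where $E:=B_{2^{n-1}-1}-A_{2^{n-1}-1}=\bigl(\begin{smallmatrix} 0 & -2x \\ 0 & 0 \end{smallmatrix}\bigr)$. Subtracting the two matrix products telescopes to
$$\prod_{i=0}^{2^n-1}B_i - \prod_{i=0}^{2^n-1}A_i = \Bigl(\prod_{i=0}^{2^{n-1}-2}A_i\Bigr)\,E\,\Bigl(\prod_{i=2^{n-1}}^{2^n-1}A_i\Bigr).$$
A direct $2\times 2$ computation, using again the identification of the rightmost factor as the matrix with entries $P^{b}_{2^{n-1}-\ell},Q^{b}_{2^{n-1}-\ell}$, shows that the column of the right-hand side corresponding to $\ell\in\{1,2\}$ equals $-2xQ^{b}_{2^{n-1}-\ell}\cdot(P_{2^{n-1}-3},Q_{2^{n-1}-3})^{\top}$. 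Finally, since $p_{2^{n-1}-1}=1$ (because the last letter of $\sigma^{n-1}(a)$ is $b$), the recurrence \eqref{eq:f-rec} rewrites $xP_{2^{n-1}-3}=P_{2^{n-1}-1}-P_{2^{n-1}-2}$ and analogously for $Q$, producing \eqref{eq:pb-1} and \eqref{eq:qb-1}. The most delicate step is the combinatorial lemma: tracking $\sigma^n(a)$ and $\sigma^n(b)$ in isolation does not suffice, and the simultaneous induction over all four letters $a,b,c,d$ is essential; once that single-position discrepancy is established, everything else is bookkeeping with the matrix product and the scalar recurrence.
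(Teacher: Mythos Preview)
Your proposal is correct and follows essentially the same route as the paper: both split the matrix product at $i=2^{n-1}$ for \eqref{eq:p-1}--\eqref{eq:q-1}, and both rely on the combinatorial fact that $\rho(\sigma^n(a))$ and $\rho(\sigma^n(b))$ differ only at position $2^{n-1}-1$, followed by the recurrence substitution $xF_{2^{n-1}-3}=F_{2^{n-1}-1}-F_{2^{n-1}-2}$, to obtain \eqref{eq:pb-1}--\eqref{eq:qb-1}. The only cosmetic difference is that the paper multiplies out the $b$-product directly with the altered factor $\bigl(\begin{smallmatrix}0&-x\\1&1\end{smallmatrix}\bigr)$ in place and then compares, whereas you form the matrix difference $\prod B_i-\prod A_i$ first; both land on the same intermediate expression $-2xQ^{b}_{2^{n-1}-\ell}\cdot(P_{2^{n-1}-3},Q_{2^{n-1}-3})^{\top}$.
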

    \begin{proof}
        For \(n\ge 2\), using \eqref{eq:convergents},
        \begin{align*}
            \begin{pmatrix}
                P_{2^{n}-2}(x) & P_{2^{n}-1}(x)\\ Q_{2^{n}-2}(x) & Q_{2^{n}-1}(x)
            \end{pmatrix}  & = 
            \begin{pmatrix}
                0 & p_{0}x\\ 1 & 1
            \end{pmatrix}
            \begin{pmatrix}
                0 & p_{1}x\\ 1 & 1
            \end{pmatrix}\dots
            \begin{pmatrix}
                0 & p_{2^{n}-1}x\\ 1 & 1
            \end{pmatrix}\\
            & =  \begin{pmatrix}
                0 & p_{0}x\\ 1 & 1
            \end{pmatrix}\cdots
            \begin{pmatrix}
                0 & p_{2^{n-1}-1}x\\ 1 & 1
            \end{pmatrix}
            \begin{pmatrix}
                0 & p_{2^{n-1}}x\\ 1 & 1
            \end{pmatrix}\cdots
            \begin{pmatrix}
                0 & p_{2^{n}-1}x\\ 1 & 1
            \end{pmatrix}\\
            & = \begin{pmatrix}
                P_{2^{n-1}-2}(x) & P_{2^{n-1}-1}(x)\\ Q_{2^{n-1}-2}(x) & Q_{2^{n-1}-1}(x)
            \end{pmatrix}\begin{pmatrix}
                P_{2^{n-1}-2}^{b}(x) & P_{2^{n-1}-1}^{b}(x)\\ 
                Q_{2^{n-1}-2}^{b}(x) & Q_{2^{n-1}-1}^{b}(x)
            \end{pmatrix}
        \end{align*}
        which proves \eqref{eq:p-1} and \eqref{eq:q-1}.

        For the remaining two equalities, we need the following decomposition structure of \(\sigma^{n}(a)\) and \(\sigma^{n}(c)\):
        \begin{align*}
            \sigma^{n}(a) & = \sigma^{n-1}(a)\sigma^{n-1}(b)\\
            & =\sigma^{n-1}(a)\sigma^{n-2}(c)\sigma^{n-2}(b) \\
            & \dots \\
            & = \sigma^{n-1}(a)\sigma^{n-2}(c)\sigma^{n-3}(c)\cdots\sigma(c)cb
        \end{align*}
        and 
        \begin{align*}   
            \sigma^{n}(c) & = \sigma^{n-1}(a)\sigma^{n-1}(d) \\
            & =\sigma^{n-1}(a)\sigma^{n-2}(c)\sigma^{n-2}(d)\\
            & \dots \\
            & = \sigma^{n-1}(a)\sigma^{n-2}(c)\sigma^{n-3}(c)\cdots\sigma(c)cd.
        \end{align*}
        So \(\sigma^{n}(a)\) and \(\sigma^{n}(c)\) differ only on the last digit. Moreover, the last digit of \(\rho(\sigma^{n}(a))\) is \(1\) while the last digit of \(\rho(\sigma^{n}(c))\) is \(-1\). Therefore,
        \begin{align*}
            p_{2^{n}}\dots p_{2^{n+1}-1} & = \rho(\sigma^{n}(b)) \\
            & =\rho(\sigma^{n-1}(c)\sigma^{n-1}(b))\\
            & = p_{0}\dots p_{2^{n-1}-2}\rho(d)p_{2^{n-1}}\dots p_{2^{n}-1}.
        \end{align*}
        This fact yields that 
        \begin{align*}
            \begin{pmatrix}
                P_{2^{n}-2}^{b}(x) & P_{2^{n}-1}^{b}(x)\\ 
                Q_{2^{n}-2}^{b}(x) & Q_{2^{n}-1}^{b}(x)
            \end{pmatrix} & = 
            \begin{pmatrix}
                0 & p_{2^{n}}x\\ 1 & 1
            \end{pmatrix}
            \begin{pmatrix}
                0 & p_{2^{n}+1}x\\ 1 & 1
            \end{pmatrix}\dots
            \begin{pmatrix}
                0 & p_{2^{n+1}-1}x\\ 1 & 1
            \end{pmatrix}\\
            & = 
            \begin{pmatrix}
                0 & p_{0}x\\ 1 & 1
            \end{pmatrix}\cdots
            \begin{pmatrix}
                0 & p_{2^{n-1}-2}x\\ 1 & 1
            \end{pmatrix}\cdot
            \begin{pmatrix}
                0 & -x\\ 1 & 1
            \end{pmatrix}\\
            & \qquad\cdot
            \begin{pmatrix}
                0 & p_{2^{n-1}}x\\ 1 & 1
            \end{pmatrix}\cdots
            \begin{pmatrix}
                0 & p_{2^{n}-1}x\\ 1 & 1
            \end{pmatrix}\\
            & = \begin{pmatrix}
                P_{2^{n-1}-3}(x) & P_{2^{n-1}-2}(x)\\ Q_{2^{n-1}-3}(x) & Q_{2^{n-1}-2}(x)
            \end{pmatrix}\begin{pmatrix}
                0 & -x\\ 1 & 1
            \end{pmatrix}
            \begin{pmatrix}
                P_{2^{n-1}-2}^{b}(x) & P_{2^{n-1}-1}^{b}(x)\\ 
                Q_{2^{n-1}-2}^{b}(x) & Q_{2^{n-1}-1}^{b}(x)
            \end{pmatrix}.
        \end{align*}
        Now we obtain that for \(\ell=1\) or \(2\), 
       \begin{equation}\label{eq:pqb-temp}
        \left\{
            \begin{aligned}
                P^{b}_{2^{n}-\ell}(x) & = P^{b}_{2^{n-1}-\ell}(x)P_{2^{n-1}-2}(x) +     Q^{b}_{2^{n-1}-\ell}(x)\left(P_{2^{n-1}-2}(x) -xP_{2^{n-1}-3}(x)    \right),\\
                Q^{b}_{2^{n}-\ell}(x) & = P^{b}_{2^{n-1}-\ell}(x)Q_{2^{n-1}-2}(x) +     Q^{b}_{2^{n-1}-\ell}(x)\left(Q_{2^{n-1}-2}(x)-xQ_{2^{n-1}-3}(x)    \right).
            \end{aligned}
        \right.
       \end{equation}
        Since \(p_{2^{n}-1}=1\) for all \(n\ge 1\), we see  
        \begin{align*}
            P_{2^{n-1}-1}(x) & =P_{2^{n-1}-2}(x)+p_{2^{n-1}-1}xP_{2^{n-1}-3}(x)\\
            & = P_{2^{n-1}-2}(x)+xP_{2^{n-1}-3}(x).
        \end{align*}
        So 
        \begin{equation}
            xP_{2^{n-1}-3}(x)=P_{2^{n-1}-1}(x)-P_{2^{n-1}-2}(x). \label{eq:pn-3}
        \end{equation}
        Similarly, we have 
        \begin{equation}
            xQ_{2^{n-1}-3}(x)=Q_{2^{n-1}-1}(x)-Q_{2^{n-1}-2}(x). \label{eq:qn-3}
        \end{equation}
        Then the equalities \eqref{eq:pb-1} and \eqref{eq:qb-1} follow from \eqref{eq:pqb-temp}, \eqref{eq:pn-3} and \eqref{eq:qn-3}.
    \end{proof}

    To obtain the Stieltjes continued fraction \(\mathrm{Stiel}_{\mathbf{p}}(x)\), we need the following subsequence of convergents.
    \begin{lemma}\label{lem:pq-rec}
        For all \(n\ge 4\), we have 
        \begin{enumerate}[label=(\arabic*)]
            \item \(Q_{2^{n}-2}(x) \equiv_{4} 1 + 2(x+x^{2}+x^{3}+x^{4}) + 2(1+x+x^{2})S_{n-2}(x) + 2T_{n-2}(x),\)
            \item \( Q_{2^{n}-1}(x) \equiv_{4} 1+(1+2x)S_{n-2}(x) + 3x^{2^{n-1}},\)
            \item \( P_{2^{n}-2}(x) \equiv_{4} 2xS_{n-2}(x) + S_{n-1}(x),\)
            \item \( P_{2^{n}-1}(x) \equiv_{4} (3x+2x^{5}) + 2(x+x^{3})S_{n-2}(x) + 2S_{n-1}(x) + 2(1+x)T_{n-2}(x),\)
            \item \(Q_{2^{n}-2}^{b}(x) \equiv_{4} 1+2(x+x^{2}+x^{3}+x^{4}) + 2(x+x^{2})S_{n-2}(x) + 2T_{n-2}(x),\)
            \item \(Q_{2^{n}-1}^{b}(x) \equiv_{4} (1+2x) + 2xS_{n-2}(x) + S_{n-1}(x),\)
            \item \(P_{2^{n}-2}^{b}(x) \equiv_{4} 2x + 2(1+x)S_{n-2}(x) + S_{n-1}(x),\)
            \item \(P_{2^{n}-1}^{b}(x) \equiv_{4} (3x+2x^{5}) + 2(1+x^{3})S_{n-2}(x) + 2(1+x)T_{n-2}(x).\)
        \end{enumerate}
    \end{lemma}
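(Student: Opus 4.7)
The plan is to prove items (1)--(8) simultaneously by induction on $n \geq 4$. The base case is handled by direct computation from the definition of the paperfolding sequence, and the inductive step is driven by the four recurrences \eqref{eq:p-1}--\eqref{eq:qb-1} from the preceding lemma, combined with the mod-$4$ identities \eqref{eq:2s2}--\eqref{eq:s2}.

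For the base case $n=4$, I would iterate $F_j(x) = F_{j-1}(x) + p_j x F_{j-2}(x)$ with the appropriate initial data to compute each of $P_{2^4-\ell}(x)$, $Q_{2^4-\ell}(x)$, $P^b_{2^4-\ell}(x)$, $Q^b_{2^4-\ell}(x)$ for $\ell \in \{1,2\}$ directly from the bits $p_0,\ldots,p_{31}$. At $n=4$ we have $S_2(x) = x + x^2 + x^4$, $S_3(x) = x + x^2 + x^4 + x^8$, and $T_2(x) = 0$, so the eight right-hand sides collapse to explicit short polynomials that can be matched coefficient by coefficient against the reductions modulo $4$.

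For the inductive step, assume (1)--(8) hold at level $n$. Applying \eqref{eq:p-1}--\eqref{eq:qb-1} with $n$ replaced by $n+1$ expresses each of the eight target polynomials at level $n+1$ as an explicit bilinear combination of the eight quantities at level $n$. I would substitute the induction hypothesis, expand modulo $4$, and simplify. Two observations make the simplification tractable: every cross-product of two terms carrying an explicit factor of $2$ vanishes since $2\cdot 2 \equiv 0 \pmod{4}$, which eliminates the majority of cross-terms; and the surviving quadratic contributions of the form $S_{n-1}^2(x)$, $S_{n-2}(x)S_{n-1}(x)$, $x^{2^{n-1}}S_{n-1}(x)$, together with the $T_{n-2}$ contributions inherited from items (1) and (5), can be rewritten via \eqref{eq:2s2}--\eqref{eq:s2} into the canonical form $\alpha(x) + \beta(x)S_{n-1}(x) + \gamma(x)S_n(x) + \delta(x)T_{n-1}(x) \pmod{4}$ demanded by the target formulas at level $n+1$.

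The main obstacle is combinatorial bookkeeping rather than any new idea. Each of the eight bilinear combinations produces on the order of a dozen surviving terms, and the reduction \eqref{eq:s2} of an $S_{n-1}^2$ contribution injects a bulky constant $3x+2x^2+2x^3+2x^4$ together with a fresh $2T_{n-1}(x)$ and $S_n(x)$ piece; tracking how these interact with the explicit constants $2x$, $2(x+x^2+x^3+x^4)$, $3x+2x^5$ already present in (1), (4), (5), (8) is the most error-prone step. To minimize propagation of errors I would process the items with no $T$ contribution first (namely (2), (3), (6), (7)), use them as verified input, and only then handle the four heavier items (1), (4), (5), (8) that produce the $T_{n-1}$ terms. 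Once each expanded expression matches the claimed right-hand side at level $n+1$, all eight items close simultaneously.
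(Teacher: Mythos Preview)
Your proposal is correct and follows essentially the same approach as the paper: induction on $n$ with the base case $n=4$ checked directly and the inductive step carried out by substituting the hypotheses into the recurrences \eqref{eq:p-1}--\eqref{eq:qb-1} and simplifying via the mod-$4$ identities \eqref{eq:2s2}--\eqref{eq:s2}. The paper processes items (1)--(8) in order rather than segregating the $T$-free items first, but this is purely organizational.
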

    \begin{proof}
        One can verify the case for \(n=4\). We assume that the above equalities hold for \(n-1\). Now we prove them for \(n\). By the inductive hypothesis, 
        \begin{align*}
            Q_{2^{n-1}-2}(x) & \equiv_{4} 1 + 2(x+x^{2}+x^{3}+x^{4}) + 2(1+x+x^{2})S_{n-3}(x) + 2T_{n-3}(x),\\
            Q_{2^{n-1}-1}(x) & \equiv_{4} 1+(1+2x)S_{n-3}(x) + 3x^{2^{n-2}},\\
            P_{2^{n-1}-2}(x) & \equiv_{4} 2xS_{n-3}(x) + S_{n-2}(x),\\
            P_{2^{n-1}-1}(x)  & \equiv_{4} (3x+2x^{5}) + 2(x+x^{3})S_{n-3}(x) + 2S_{n-2}(x) + 2(1+x)T_{n-3}(x),\\
            Q_{2^{n-1}-2}^{b}(x) & \equiv_{4} 1+2(x+x^{2}+x^{3}+x^{4}) + 2(x+x^{2})S_{n-3}(x) + 2T_{n-3}(x),\\
            Q_{2^{n-1}-1}^{b}(x) & \equiv_{4} (1+2x) + 2xS_{n-3}(x) + S_{n-2}(x),\\
            P_{2^{n-1}-2}^{b}(x) & \equiv_{4} 2x + 2(1+x)S_{n-3}(x) + S_{n-2}(x),\\
            P_{2^{n-1}-1}^{b}(x) & \equiv_{4} (3x+2x^{5}) + 2(1+x^{3})S_{n-3}(x) + 2(1+x)T_{n-3}(x).
        \end{align*}

        (1) It follows from \eqref{eq:q-1} that 
        \begin{align*}
            Q_{2^{n}-2}(x) & = Q_{2^{n-1}-2}(x)P^{b}_{2^{n-1}-2}(x) + Q_{2^{n-1}-1}(x)Q^{b}_{2^{n-1}-2}(x),\\
            & \equiv_{4} 1 + 2(x^{2}+x^{3}+x^{4}) + 2(x+x^{2})S_{n-3}(x) + 2S_{n-3}(x)S_{n-2}(x) + 2T_{n-3}(x)\\
            & \equiv_{4} 1 + 2(x^{2}+x^{3}+x^{4}) + 2(x+x^{2})S_{n-3}(x) \\
            & \quad\ + 2(S_{n-1}(x)-x) + 2x^{2^{n-2}}S_{n-2}(x) \tag*{by \eqref{eq:ss}}\\
            & \quad\ + 2T_{n-2}(x) - 2x^{2^{n-2}}(S_{n-3}(x)-x-x^{2})  \tag*{by  \eqref{eq:t}}\\
            & \equiv_{4} 1 + 2(x+x^{2}+x^{3}+x^{4}) + 2(1+x+x^{2})S_{n-2}(x) + 2T_{n-2}(x).
        \end{align*}

        (2) It follows from \eqref{eq:q-1} that 
        \begin{align*}
            Q_{2^{n}-1}(x) & = Q_{2^{n-1}-2}(x)P^{b}_{2^{n-1}-1}(x) + Q_{2^{n-1}-1}(x)Q^{b}_{2^{n-1}-1}(x),\\
            & \equiv_{4} (1+x+2x^{2}+2x^{3}+2x^{4}) + 2x\cdot x^{2^{n-2}} + 3x^{2^{n-1} }\\
            &\quad\ + 2x^{2}S_{n-3}(x) + 2T_{n-3}(x) + S_{n-3}^{2}(x),\\
            & \equiv_{4} 1+(1+2x)S_{n-2}(x)+3x^{2^{n-1}}. \tag*{by \eqref{eq:s2}}
        \end{align*}

        (3) It follows from \eqref{eq:p-1} that 
        \begin{align*}
            P_{2^{n}-2}(x) & = P_{2^{n-1}-2}(x)P^{b}_{2^{n-1}-2}(x) + P_{2^{n-1}-1}(x)Q^{b}_{2^{n-1}-2}(x),\\
            & \equiv_{4} 3x+2(x^{2}+x^{3}+x^{4})+ 2(x+x^{2})S_{n-3}(x) + 2(1+x)S_{n-2}(x)\\
            & \quad\ + 2S_{n-3}(x)S_{n-2}(x) + S_{n-2}^{2}(x) + 2T_{n-3}(x)\\
            & \equiv_{4} 2xS_{n-2}(x) + S_{n-1}(x). \tag*{by \eqref{eq:ss},  \eqref{eq:t}, \eqref{eq:s2}}
        \end{align*}

        (4) It follows from \eqref{eq:p-1} that
        \begin{align*}
            P_{2^{n}-1}(x) & = P_{2^{n-1}-2}(x)P^{b}_{2^{n-1}-1}(x) + P_{2^{n-1}-1}(x)Q^{b}_{2^{n-1}-1}(x),\\
            & \equiv_{4} (3x+2x^{2}+2x^{5}) + 2(x+x^{3})S_{n-3}(x) + 2(1+x)S_{n-2}(x)\\
            &\quad\ + 2(1+x)S_{n-3}(x)S_{n-2}(x) + 2(1+x)T_{n-3}(x) + 2S_{n-2}^{2}(x),\\
            & \equiv_{4} (3x+2x^{2}+2x^{5}) + 2(1+x^{3})S_{n-2}(x) -2(x+x^{3})x^{2^{n-2}}\\
            & \quad\ + 2(1+x)S_{n-1}(x) -2x(1+x) - 2(1+x)x^{2^{n-2}}S_{n-2}(x) \tag*{by \eqref{eq:ss}}\\
            & \quad\ + 2(1+x)T_{n-2}(x) - 2(1+x)x^{2^{n-2}}S_{n-3}(x) + 2(x+x^{3})x^{2^{n-2}} \tag*{by \eqref{eq:t}}\\
            & \quad\ + 2S_{n-1}(x)-2x\tag*{by \eqref{eq:2s2}}\\
            & \equiv_{4} (3x+2x^{5}) + 2(1+x^{3})S_{n-2}(x) + 2xS_{n-1}(x) -2(1+x)x^{2^{n-1}} + 2(1+x)T_{n-2}(x)\\
            & \equiv_{4} (3x+2x^{5}) + 2(x+x^{3})S_{n-2}(x) + 2S_{n-1}(x) + 2(1+x)T_{n-2}(x). 
        \end{align*}

        (5) It follows from \eqref{eq:qb-1} that 
        \begin{align*}
            Q^{b}_{2^{n}-2}(x) & \equiv_{4} Q_{2^{n}-2}(x) +  Q^{b}_{2^{n-1}-2}(x)\left(2Q_{2^{n-1}-2}(x) + 2Q_{2^{n-1}-1}(x)\right)\\
            & \equiv_{4} Q_{2^{n}-2}(x) + 2S_{n-2}(x).
        \end{align*}

        (6) According to \eqref{eq:qb-1},
        \begin{align*}
            Q^{b}_{2^{n}-1}(x) & \equiv_{4} Q_{2^{n}-1}(x)+ Q^{b}_{2^{n-1}-1}(x)\left(2Q_{2^{n-1}-2}(x) +2Q_{2^{n-1}-1}(x)\right)\\
            & \equiv_{4} Q_{2^{n}-1}(x) + 2(1+S_{n-2}(x))S_{n-2}(x)\\
            & \equiv_{4} (1+2x) + 2xS_{n-2}(x) + S_{n-1}(x). \tag*{by \eqref{eq:2s2}}
        \end{align*}

        (7) It follows from \eqref{eq:pb-1} that,
        \begin{align*}
            P^{b}_{2^{n}-2}(x) & \equiv_{4} P_{2^{n}-2}(x) + Q^{b}_{2^{n-1}-2}(x)\left(2P_{2^{n-1}-2}(x) + 2P_{2^{n-1}-1}(x)\right)\\
            & \equiv_{4} P_{2^{n}-2}(x) + 2x + 2S_{n-2}(x).
        \end{align*}
        
        (8) According to \eqref{eq:pb-1}, 
        \begin{align*}
            P^{b}_{2^{n}-1}(x) & \equiv_{4} P_{2^{n}-1}(x) + Q^{b}_{2^{n-1}-1}(x)\left(2P_{2^{n-1}-2}(x) + 2P_{2^{n-1}-1}(x)\right)\\
            & \equiv_{4} P_{2^{n}-1}(x) + 2(1+S_{n-2}(x))(x+S_{n-2}(x))\\
            & \equiv_{4} (3x+2x^{5}) + 2(1+x^{3})S_{n-2}(x) + 2(1+x)T_{n-2}(x). \tag*{by \eqref{eq:2s2}}
        \end{align*}
        By induction, the result holds from the above.
    \end{proof}

    In the following we give the explicit expression of \(\mathrm{Stiel}_{\mathbf{p}}(x)\) from its convergents. As we shall see later, the Stieltjes continued fraction \(\mathrm{Stiel}_{\mathbf{p}}(x)\) is related to the generating function of Catalan numbers. The \(n\)th Catalan number is \(C_{n}=\frac{1}{n+1}\binom{2n}{n}\). It is well known that the generating function \(\phi(x):=\sum_{n\geq 0}C_{n}x^{n} \) of the Catalan numbers satisfies
    \begin{equation}
        \phi(x) = 1 + x\phi^{2}(x);\label{eq:phi}
    \end{equation}
    see for example \cite{Ni11, St99}. Moreover, one has
    \begin{equation}
        \phi(x) = \frac{1-\sqrt{1-4x}}{2x}.\label{eq:gen_catalan}
    \end{equation}
    The next lemma gives explicit values of the Catalan numbers modulo \(4\); see \cite[Theorem 2.3]{ELY08}. 
    \begin{lemma}\label{lem:catalan}
        Let \(C_n\) be the \(n\)th Catalan number. Then 
        \[C_n \equiv_{4} 
        \begin{cases}
            1, & \text{if } n=2^{a}-1 \text{ for some }a\ge 0;\\
            2, & \text{if } n=2^{b}+2^{a}-1 \text{ for some }b>a\ge 0;\\
            0, & \text{otherwise.} 
        \end{cases}\]
    \end{lemma}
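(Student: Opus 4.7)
The plan is to work directly with the functional equation $\phi(x) = 1 + x\phi^2(x)$ given in \eqref{eq:phi}, and extract $\phi \bmod 4$ by a two-step lift from $\bmod\,2$ to $\bmod\,4$.

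First I would determine $\phi \bmod 2$. Since squaring is a Frobenius in characteristic $2$, the equation gives $\phi(x) \equiv 1 + x\phi(x^2) \pmod{2}$. Iterating this relation in $\mathbb{F}_2[[x]]$ and noting that each substitution $\phi(x^{2^k})$ contributes to strictly higher-degree terms, the iteration converges in the $x$-adic topology to
\begin{equation*}
\phi(x) \equiv \alpha(x) := \sum_{a\ge 0} x^{2^{a}-1} \pmod{2}.
\end{equation*}
This already recovers the first and third cases of the claim at the level of parity.

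Next I would lift to $\bmod\,4$ by writing $\phi(x) = \alpha(x) + 2\beta(x)$ for some $\beta(x) \in \mathbb{Z}[[x]]$ (well-defined because $\alpha$ is the reduction of $\phi$ mod $2$), substituting into $\phi = 1 + x\phi^2$, and reducing modulo $4$. Since $(\alpha+2\beta)^2 \equiv \alpha^2 \pmod 4$, this collapses to
\begin{equation*}
2\beta(x) \equiv 1 + x\alpha^2(x) - \alpha(x) \pmod{4},
\end{equation*}
so the problem reduces to computing the right-hand side modulo $4$. Expanding $\alpha^2 = \sum_{a,b\ge 0} x^{2^a+2^b-2}$ and splitting into the diagonal $a=b$ and off-diagonal $a\neq b$ contributions, the diagonal gives $\sum_a x^{2^{a+1}-2}$ while the off-diagonal gives $2\sum_{a<b} x^{2^a+2^b-2}$. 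Multiplying by $x$ reindexes the diagonal to $\alpha(x)-1$, yielding the clean identity
\begin{equation*}
x\alpha^2(x) - \alpha(x) + 1 = 2\sum_{0\le a<b} x^{2^a+2^b-1}.
\end{equation*}
Therefore $\beta(x) \equiv \sum_{a<b} x^{2^a+2^b-1} \pmod 2$, and combining gives
\begin{equation*}
\phi(x) \equiv \sum_{a\ge 0} x^{2^a-1} + 2\sum_{0\le a<b} x^{2^a+2^b-1} \pmod{4},
\end{equation*}
from which the three cases of the lemma follow by reading off coefficients and observing that each integer $n\ge 0$ has at most one representation as $2^a - 1$ or as $2^a + 2^b - 1$ with $a < b$ (uniqueness of binary expansion).

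The only delicate point is justifying the convergence in Step 1: one must verify that the truncation $\sum_{a=0}^{N} x^{2^a-1}$ agrees with $\phi \bmod 2$ in every coefficient of degree $< 2^{N+1} - 1$, which follows by induction from the functional equation once one notes the error term in $1+x\phi(x^2)$ at depth $N$ starts at degree $2^{N+1}-1$. Apart from this, the rest is bookkeeping in $\mathbb{Z}[[x]]$; the main algebraic trick is the collapse $x\alpha^2 - \alpha + 1 = 2\sum_{a<b}x^{2^a+2^b-1}$, which makes the $\bmod 4$ answer transparent.
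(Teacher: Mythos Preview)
Your argument is correct. The two-step lift from $\bmod\,2$ to $\bmod\,4$ via the functional equation \eqref{eq:phi} works exactly as you describe: the Frobenius reduction in characteristic~$2$ yields $\phi\equiv\alpha\pmod 2$ with $\alpha(x)=\sum_{a\ge0}x^{2^a-1}$, and the substitution $\phi=\alpha+2\beta$ into $\phi=1+x\phi^2$ collapses to $2\beta\equiv 1+x\alpha^2-\alpha\pmod 4$, which your exact identity $x\alpha^2-\alpha+1=2\sum_{0\le a<b}x^{2^a+2^b-1}$ resolves cleanly. The disjointness and uniqueness of the representations $2^a-1$ and $2^a+2^b-1$ follow from binary expansion as you note.

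As for comparison: the paper does not prove this lemma at all --- it is quoted from \cite[Theorem~2.3]{ELY08} (Eu--Liu--Yeh), where the argument proceeds via Kummer's theorem on the $p$-adic valuation of binomial coefficients applied to $C_n=\binom{2n}{n}-\binom{2n}{n+1}$. Your approach is genuinely different and, in the context of this paper, arguably more natural: it stays entirely within the generating-function framework that the paper already uses, requires no external input beyond \eqref{eq:phi}, and in fact the mod-$4$ expansion of $\phi$ you derive is precisely the content of \eqref{eq:ct} rewritten. The Kummer-type argument in \cite{ELY08} is more combinatorial and generalizes more readily to higher powers of~$2$; your functional-equation lift is shorter and self-contained but is tailored to the modulus~$4$.
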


    \begin{proposition}\label{prop:algebraic}
        The infinite Stieltjes continued fraction \(\mathrm{Stiel}_{\mathbf{p}}(x)\) defined by the paperfolding sequence \(\mathbf{p}\) is congruent modulo $4$ to an algebraic series in \(\mathbb{Z}[[x]]\). Namely, \(\mathrm{Stiel}_{\mathbf{p}}(x)  \equiv_4 2x + (3x+2x^{3})\phi(x).\)
    \end{proposition}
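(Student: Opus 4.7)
The plan is to identify $\mathrm{Stiel}_{\mathbf{p}}(x) \pmod 4$ with the $x$-adic limit of the subsequence of convergents $P_{2^{n}-1}(x)/Q_{2^{n}-1}(x)$, and then verify the proposed closed form by a direct algebraic manipulation modulo $4$. The convergence of Stieltjes convergents in the $x$-adic topology is standard (the difference between successive convergents has valuation growing with $n$), so that $\mathrm{Stiel}_{\mathbf{p}}(x) \equiv P_{\infty}(x)/Q_{\infty}(x) \pmod 4$, where $P_{\infty}$ and $Q_{\infty}$ denote the $x$-adic limits of $P_{2^{n}-1}$ and $Q_{2^{n}-1}$ respectively.

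First I would pass to the limit in parts (2) and (4) of Lemma \ref{lem:pq-rec}. Since $S_{n-2}(x), S_{n-1}(x) \to S_{\infty}(x)$, $T_{n-2}(x) \to T_{\infty}(x)$, and $x^{2^{n-1}} \to 0$ as $n \to \infty$, this yields
\[
Q_{\infty}(x) \equiv 1 + (1+2x)S_{\infty}(x) \pmod 4
\]
and
\[
P_{\infty}(x) \equiv (3x+2x^{5}) + 2(1+x+x^{3})S_{\infty}(x) + 2(1+x)T_{\infty}(x) \pmod 4.
\]
Because $Q_{\infty}(0) \equiv 1 \pmod 4$, the series $Q_{\infty}(x)$ is a unit in $\mathbb{Z}/4\mathbb{Z}[[x]]$, so it suffices to verify the equivalent polynomial identity
\[
P_{\infty}(x) \equiv Q_{\infty}(x)\bigl(2x + (3x+2x^{3})\phi(x)\bigr) \pmod 4.
\]

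To evaluate the right-hand side, I would invoke Lemma \ref{lem:catalan}, which gives $x\phi(x) \equiv S_{\infty}(x) + 2\sum_{b>a\geq 0}x^{2^{b}+2^{a}} \pmod 4$. The inner double sum splits into the portion with $a\in\{0,1\}$, which equals $(x+x^{2})S_{\infty}(x) - (x^{2}+x^{3}+x^{4})$, and the portion with $a\geq 2$, which is exactly $T_{\infty}(x)$. This furnishes a closed expression for $(3x+2x^{3})\phi(x)\pmod 4$ purely in terms of $S_{\infty}$, $T_{\infty}$, and a low-degree polynomial. Expanding $Q_{\infty}(x)\cdot\bigl(2x+(3x+2x^{3})\phi(x)\bigr)$ then produces cross terms of the shape $S_{\infty}^{2}$, $xS_{\infty}^{2}$, $xS_{\infty}$, and $S_{\infty}\cdot\!\sum_{b>a} x^{2^{b}+2^{a}}$ modulo $4$, and the square terms are collapsed using the $n=\infty$ limits of \eqref{eq:2s2}--\eqref{eq:s2}. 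The comparison against $P_{\infty}$ then becomes a matching of the $S_{\infty}$- and $T_{\infty}$-coefficients and a residual low-degree polynomial identity.

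The main obstacle will be the bookkeeping in this last step. In particular, a product of the form $S_{\infty}(x)\cdot \sum_{b>a\geq 0}x^{2^{b}+2^{a}}$ modulo $2$ is not directly controlled by the identities \eqref{eq:2s2}--\eqref{eq:s2}, and an auxiliary identity (obtainable by direct combinatorial inspection of the exponent sets, similar in spirit to the derivations of \eqref{eq:t} and \eqref{eq:s2}) appears to be needed to finish the match. Once that identity is in hand, everything reduces to linear combinations of $1$, $S_{\infty}$, $T_{\infty}$ and polynomials of bounded degree, so the final verification is a tedious but finite check.
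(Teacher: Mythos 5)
Your overall strategy --- pass to the limit along the structured subsequence of convergents supplied by Lemma \ref{lem:pq-rec}, clear the denominator, and verify an identity among $S_{\infty}$, $T_{\infty}$ and $\phi$ modulo $4$ --- is essentially the paper's, up to the cosmetic difference that the paper works with $P_{2^{n}-2}/Q_{2^{n}-2}$ rather than $P_{2^{n}-1}/Q_{2^{n}-1}$; since $Q_{2^{n}-2}\equiv_{4}1+2(\cdots)$ one has $Q_{2^{n}-2}^{2}\equiv_{4}1$, so the paper never needs to invert $Q_{\infty}$ but simply multiplies numerator and denominator by $Q_{2^{n}-2}$. Your limits $Q_{\infty}\equiv_{4}1+(1+2x)S_{\infty}$ and $P_{\infty}\equiv_{4}(3x+2x^{5})+2(1+x+x^{3})S_{\infty}+2(1+x)T_{\infty}$ are correctly read off the lemma, the reduction to $P_{\infty}\equiv_{4}Q_{\infty}\bigl(2x+(3x+2x^{3})\phi\bigr)$ is legitimate, and your splitting of $2\sum_{b>a\ge 0}x^{2^{b}+2^{a}}$ reproduces \eqref{eq:ct}.

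The genuine gap is precisely the item you flag and then defer: the cross term $2S_{\infty}(x)T_{\infty}(x)\bmod 4$. This is not peripheral bookkeeping --- it is the whole content of the verification, and your write-up contains no argument for it. The paper's device is to run \eqref{eq:ct} backwards, solving for $2T_{\infty}\equiv_{4}x\phi(x)+3S_{\infty}+2(x+x^{2})S_{\infty}+2(x^{2}+x^{3}+x^{4})$ so that $2S_{\infty}T_{\infty}$ becomes $xS_{\infty}\phi$ plus terms in $S_{\infty}$ and $S_{\infty}^{2}$, which are then collapsed to polynomials in $x\phi$ via \eqref{eq:s2_mod_4}, \eqref{eq:s_mod_4} and the functional equation \eqref{eq:phi}; some substitute for this step is indispensable. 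Moreover, your closing claim that what remains is ``a tedious but finite check'' is not correct: after all your substitutions the residual still contains $2(1+x)T_{\infty}+2S_{\infty}T_{\infty}$, i.e.\ infinitely many coefficients, and if you actually carry the reduction through (using \eqref{eq:s2} to eliminate $S_{\infty}^{2}$) you are left needing
\[
2x^{5}+2x^{4}S_{\infty}(x)+2(1+x)T_{\infty}(x)+2S_{\infty}(x)T_{\infty}(x)\equiv_{4}0,
\]
whose coefficient of $x^{6}$ is $2\,[x^{2}]S_{\infty}=2\not\equiv_{4}0$. So the missing auxiliary identity cannot simply be ``filled in'': before the proof can be completed you must track down where this degree-$6$ discrepancy comes from --- by testing your intermediate formulas, and the target congruence itself, against the exact seventh convergent $(x-2x^{3})/(1+x-x^{2}-2x^{3}-x^{4})$ --- rather than assuming the final match will work out.
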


    \begin{proof}
        From Lemma \ref{lem:pq-rec}, we have \(Q_{2^{n}-2}^{2}(x)\equiv_4 1\). Using this fact, we obtain that
        \begin{align}
            \mathrm{Stiel}_{\mathbf{p}}(x) & \equiv_4  \lim_{n\to\infty}\frac{P_    {2^{n}-2}(x)}{Q_{2^{n}-2}(x)}\nonumber\\
            & \equiv_4 \lim_{n\to\infty}\frac{P_{2^{n}-2}(x)Q_{2^{n}-2}(x)}{Q_{2^    {n}-2}^{2}(x)}\nonumber\\
            & \equiv_4 \lim_{n\to\infty}P_{2^{n}-2}(x)Q_{2^{n}-2}(x)\quad\qquad \text{since}~Q_{2^{n}-2}(0)=1\nonumber\\
            & \equiv_4\lim_{n\to\infty}\left(2xS_{n-2}(x) + S_{n-1}(x) + 2(x+x^{2}+x^{3}+x^{4})S_{n-1}(x)\right.\nonumber\\
            & \qquad\qquad \left. +2(1+x+x^{2})S_{n-2}(x)S_{n-1}(x)+2T_{n-2}(x)S_{n-1}(x)\right)\nonumber\\
            & \equiv_4 (1+2x^{2}+2x^{3}+2x^{4})S_{\infty}(x) + 2(1+x+x^{2})S^{2}_{\infty}(x) + 2T_{\infty}(x)S_{\infty}(x).\label{eq:stiel_p_1}
        \end{align}
        We need to calculate  \(2T_{\infty}(x)~(\mathrm{mod}~4)\), \(S^{2}_{\infty}(x)~(\mathrm{mod}~4)\) and \(S_{\infty}(x)~(\mathrm{mod}~4)\). By Lemma \ref{lem:catalan}, we have
        \begin{align}
            \phi(x) & \equiv_4 \sum_{a=0}^{+\infty}x^{2^{a}-1}     + 2\sum_{b=1}^{+\infty}\sum_{a=0}^{b-1}x^{2^{b}+2^{a}-1}\nonumber\\
            & \equiv_4 x^{-1}S_{\infty}(x) + 2x^{-1}\left(\sum_{b=3}^{+\infty}    \sum_{a=0}^{b-1}x^{2^{b}+2^{a}}+(x^{3}+x^{5}+x^{6})\right)\nonumber\\
            & \equiv_4 x^{-1}S_{\infty}(x) + 2x^{-1}\left(\sum_{b=3}^{+\infty}    \sum_{a=2}^{b-1}x^{2^{b}+2^{a}}+ (x+x^{2})\sum_{b=3}^{+\infty}x^{2^    {b}}+(x^{3}+x^{5}+x^{6})\right)\nonumber\\
            & \equiv_4 x^{-1}S_{\infty}(x) + 2x^{-1}T_{\infty}(x)+2(1+x)S_    {\infty}(x)+2(x+x^{2}+x^{3}). \label{eq:ct}
        \end{align}
        Moreover, Eq. \eqref{eq:ct} yields that \(S_{\infty}(x)\equiv_{2} x\phi(x)\). So
        \begin{align}
            S_{\infty}^{2}(x) & \equiv_4 \left(x\phi(x)\right)^{2}. \label{eq:s2_mod_4}
        \end{align} 
    To evaluate \(S_{\infty}(x)~(\mathrm{mod}~4)\), we use \eqref{eq:s2_mod_4}. Then  
    \begin{align*}
        S_{\infty}(x)-x-x^{2} & = S_{\infty}(x^{4}) \equiv_4 S_{\infty}^{4}(x) \equiv_4 \left(x\phi(x)\right)^{4}
    \end{align*}
    which implies
    \begin{align}
        S_{\infty}(x) & \equiv_4 \left(x\phi(x)\right)^{4}+x+x^{2}.\label{eq:s_mod_4}
    \end{align}
    Now, combining \eqref{eq:stiel_p_1} and \eqref{eq:ct}, we have 
    \begin{align*}
        \mathrm{Stiel}_{\mathbf{p}}(x) & \equiv_4 S_{\infty}(x) + S_{\infty}^{2}(x) + xS_{\infty}(x)\phi(x)\\
        & \equiv_4 (x+x^{2}) + (x^{2}+x^{3})\phi(x) + \left(x\phi(x)\right)^{2} +\left(x\phi(x)\right)^{4} + \left(x\phi(x)\right)^{5} \tag*{by \eqref{eq:s2_mod_4} and \eqref{eq:s_mod_4}}\\
        & \equiv_4 2x + (3x+2x^{3})\phi(x). \tag*{by \eqref{eq:phi}}
    \end{align*}
    \end{proof}
    
    Now we have seen that \(\mathrm{Stiel}_{\mathbf{p}}(x)\) is congruent modulo $4$ to an algebraic series in \(\mathbb{Z}[[x]]\). Combining with Denef-Lipshitz's result, we have the automaticity of \(\mathrm{Stiel}_{\mathbf{p}}\) \(\mathrm{mod}\ 4\).

   \begin{proof}[Proof of Theorem \ref{thm:01}]
        The first part follows from Proposition \ref{prop:algebraic}. Then using Theorem \ref{thm:ckmr}, we see that \((a_{n})_{n\ge 0}\) is \(2\)-automatic.
    \end{proof}

    \section{Rudin-Shapiro sequence and its Stieltjes continued fraction}\label{sec:rudin}
    Let \(r\) be the Rudin-Shapiro sequence over the alphabet \(\left\{-1,1\right\}\). Recall that the sequence can also be generated by the substitution \[\sigma_{rs}: a\to ab,\ b\to ac,\ c\to db,\ d\to dc\] and then the projection \[\rho_{rs}: a\to 1,\, b\to 1,\ c\to -1,\ d\to -1.\]
    The next observation is useful while deducing the recurrence relation of the convergents.   
    \begin{lemma}\label{lem:rs-1}
        Let \(\iota\) be the coding on \(\{-1,1\}\) which maps \(1\to -1\) and \(-1 \to 1\). Then for all \(n\ge 0\),
        \[\rho_{rs}(\sigma_{rs}^{n}(a))=\iota(\rho_{rs}(\sigma_{rs}^{n}(d)))\quad\text{and}\quad\rho_{rs}(\sigma_{rs}^{n}(b))=\iota(\rho_{rs}(\sigma_{rs}^{n}(c))).\] 
    \end{lemma}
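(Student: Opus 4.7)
The plan is to prove both identities simultaneously by induction on \(n\), exploiting the symmetry built into the Rudin-Shapiro substitution. The key observation is that \(\sigma_{rs}\) treats the pair \((a,d)\) and the pair \((b,c)\) in a dual way: comparing the images \(\sigma_{rs}(a)=ab\) with \(\sigma_{rs}(d)=dc\), and \(\sigma_{rs}(b)=ac\) with \(\sigma_{rs}(c)=db\), one sees that each letter of \(\sigma_{rs}(d)\) is obtained from the corresponding letter of \(\sigma_{rs}(a)\) by the swap \(a\leftrightarrow d,\ b\leftrightarrow c\) (and likewise for \((b,c)\)). Under \(\rho_{rs}\), this letter swap corresponds precisely to the sign flip \(\iota\), since \(\rho_{rs}\) sends \(a,b\) to \(1\) and \(c,d\) to \(-1\).

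For the base case \(n=0\) the identities reduce to \(\rho_{rs}(a)=1=\iota(-1)=\iota(\rho_{rs}(d))\) and \(\rho_{rs}(b)=1=\iota(-1)=\iota(\rho_{rs}(c))\). For the inductive step, assuming both identities hold for some \(n\ge 0\), I would use the morphism property \(\sigma_{rs}^{n+1}=\sigma_{rs}^{n}\circ\sigma_{rs}\) to write
\[
\sigma_{rs}^{n+1}(a)=\sigma_{rs}^{n}(a)\sigma_{rs}^{n}(b),\qquad \sigma_{rs}^{n+1}(d)=\sigma_{rs}^{n}(d)\sigma_{rs}^{n}(c),
\]
\[
\sigma_{rs}^{n+1}(b)=\sigma_{rs}^{n}(a)\sigma_{rs}^{n}(c),\qquad \sigma_{rs}^{n+1}(c)=\sigma_{rs}^{n}(d)\sigma_{rs}^{n}(b).
\]
Applying \(\rho_{rs}\) (which distributes over concatenation since it is a letter-to-letter coding) and noting that \(\iota\) acts componentwise on words in \(\{-1,1\}^{*}\), the inductive hypothesis lets me replace each factor \(\rho_{rs}(\sigma_{rs}^{n}(a))\) and \(\rho_{rs}(\sigma_{rs}^{n}(b))\) by \(\iota(\rho_{rs}(\sigma_{rs}^{n}(d)))\) and \(\iota(\rho_{rs}(\sigma_{rs}^{n}(c)))\) respectively, then pull \(\iota\) outside the concatenation to obtain the two identities for \(n+1\).

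There is essentially no obstacle here; the lemma is a bookkeeping observation about an intrinsic symmetry of \(\sigma_{rs}\). The only point that requires care is matching the correct pairs of letters at the inductive step (\((a,d)\) needs the pair \((b,c)\), and \((b,c)\) needs \((a,d)\), so both identities must be carried through the induction in tandem rather than separately).
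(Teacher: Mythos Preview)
Your proof is correct and follows essentially the same approach as the paper: induction on \(n\), using the base case \(n=0\) and the decomposition \(\sigma_{rs}^{n+1}(a)=\sigma_{rs}^{n}(a)\sigma_{rs}^{n}(b)\), \(\sigma_{rs}^{n+1}(d)=\sigma_{rs}^{n}(d)\sigma_{rs}^{n}(c)\) (and similarly for \(b,c\)), then applying the inductive hypothesis to each factor and pulling \(\iota\) outside the concatenation. The paper presents only the first identity in detail and notes the second follows in the same way, whereas you make explicit that both identities must be carried together through the induction; this is a good observation and matches what the paper implicitly does.
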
 
    \begin{proof}
        It is clear that the result holds for \(n=0\). Now suppose the result holds for all \(n\leq m\). Since 
        \begin{align*}
            \rho_{rs}(\sigma_{rs}^{m+1}(a)) & = \rho_{rs}(\sigma_{rs}^{m}(a))\rho_{rs}(\sigma_{rs}^{m}(b))\\
            & = \iota(\rho_{rs}(\sigma_{rs}^{m}(d)))\iota(\rho_{rs}(\sigma_{rs}^{m}(c)))\qquad\text{by the induction hypothesis}\\
            & = \iota(\rho_{rs}(\sigma_{rs}^{m}(d))\sigma_{rs}^{m}(c)))\\
            & = \iota(\rho_{rs}(\sigma_{rs}^{m+1}(d)))
        \end{align*}
        which shows that the first equality in the statement holds for \(n=m+1\). The validity of the second equality for \(n=m+1\) follows in the same way. This proves the result.
    \end{proof}


    To reduce the number of new notations, we redefine \(P, Q, P^{b}, Q^{b}\). Let \(P_{n}(x)/Q_{n}(x)\) be the \(n\)th convergent of \(\mathrm{Stiel}_{\mathbf{r}}(x)\). Namely, \(P_{n}(x)/Q_{n}(x):=\mathrm{Stiel}_{\mathbf{r}|_n}(x)\). In addition, for all \(n\ge 2\), we define \(P_{2^{n}-1}^{b}(x)/Q_{2^{n}-1}^{b}(x):=\text{Stiel}_{\rho(\sigma^{n}(b))}(x)\). Since \(\sigma_{rs}^{n}(a)=\sigma_{rs}^{n-1}(a)\sigma_{rs}^{n-1}(b)\), for all \(n\ge 2\),
    \begin{equation*}
        \begin{pmatrix}
            P_{2^{n}-2}^{b}(x) & P_{2^{n}-1}^{b}(x)\\ 
            Q_{2^{n}-2}^{b}(x) & Q_{2^{n}-1}^{b}(x)
        \end{pmatrix} := 
        \begin{pmatrix}
            0 & r_{2^{n}}x\\ 1 & 1
        \end{pmatrix}
        \begin{pmatrix}
            0 & r_{2^{n}+1}x\\ 1 & 1
        \end{pmatrix}\dots
        \begin{pmatrix}
            0 & r_{2^{n+1}-1}x\\ 1 & 1
        \end{pmatrix}.
    \end{equation*}

    The relation of \(P, Q, P^{b}, Q^{b}\) are formalized in the following lemma.
    \begin{lemma}
        For all \(n\ge 2\) and \(\ell=1\) or \(2\),
        \begin{align}
            P_{2^{n}-\ell}(x) & = P_{2^{n-1}-2}(x)P^{b}_{2^{n-1}-\ell}(x) + P_{2^{n-1}-1}(x)Q^{b}_{2^{n-1}-\ell}(x),\label{eq:rs-p-1}\\
            Q_{2^{n}-\ell}(x) & = Q_{2^{n-1}-2}(x)P^{b}_{2^{n-1}-\ell}(x) + Q_{2^{n-1}-1}(x)Q^{b}_{2^{n-1}-\ell}(x),\label{eq:rs-q-1}\\
            P^{b}_{2^{n}-\ell}(x) & = P_{2^{n-1}-2}(x)P^{b}_{2^{n-1}-\ell}(-x) + P_{2^{n-1}-1}(x)Q^{b}_{2^{n-1}-\ell}(-x),\label{eq:rs-pb-1}\\
            Q^{b}_{2^{n}-\ell}(x) & = Q_{2^{n-1}-2}(x)P^{b}_{2^{n-1}-\ell}(-x) + Q_{2^{n-1}-1}(x)Q^{b}_{2^{n-1}-\ell}(-x).\label{eq:rs-qb-1}        \end{align}
    \end{lemma}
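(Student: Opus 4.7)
The plan is to derive all four identities by splitting the product of convergent matrices at position $2^{n-1}$, using the decompositions of $\sigma_{rs}^n(a)$ and $\sigma_{rs}^n(b)$ together with Lemma \ref{lem:rs-1}.

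For \eqref{eq:rs-p-1} and \eqref{eq:rs-q-1}, I would imitate the paperfolding proof verbatim. Since $\sigma_{rs}^n(a)=\sigma_{rs}^{n-1}(a)\sigma_{rs}^{n-1}(b)$, the first $2^{n-1}$ factors of the convergent matrix product for $\mathrm{Stiel}_{\mathbf{r}|_{2^n-1}}(x)$ produce the $2\times 2$ matrix with entries $P_{2^{n-1}-\ell}(x),Q_{2^{n-1}-\ell}(x)$, and the next $2^{n-1}$ factors, which use the letters $r_{2^{n-1}},\dots,r_{2^{n}-1}=\rho_{rs}(\sigma_{rs}^{n-1}(b))$, produce by definition the matrix with entries $P^{b}_{2^{n-1}-\ell}(x),Q^{b}_{2^{n-1}-\ell}(x)$. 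Multiplying the two matrices and reading off the right-hand column yields \eqref{eq:rs-p-1} and \eqref{eq:rs-q-1}.

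For \eqref{eq:rs-pb-1} and \eqref{eq:rs-qb-1}, I would use the parallel decomposition $\sigma_{rs}^n(b)=\sigma_{rs}^{n-1}(a)\sigma_{rs}^{n-1}(c)$. The defining matrix product for $P^{b}_{2^n-\ell}(x),Q^{b}_{2^n-\ell}(x)$ thus factors as the matrix product over $\rho_{rs}(\sigma_{rs}^{n-1}(a))$, which is the $P,Q$ block at level $n-1$, times the matrix product over $\rho_{rs}(\sigma_{rs}^{n-1}(c))$. Here is the one nontrivial step: by Lemma \ref{lem:rs-1}, $\rho_{rs}(\sigma_{rs}^{n-1}(c))=\iota(\rho_{rs}(\sigma_{rs}^{n-1}(b)))$, so every $r_j$ in the second half flips sign. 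Since a sign flip $r_j\mapsto -r_j$ in each matrix factor $\left(\begin{smallmatrix}0 & r_jx\\ 1 & 1\end{smallmatrix}\right)$ is literally the substitution $x\mapsto -x$ in the $(1,2)$-entry and nowhere else affects the product, the second matrix factor is exactly $\left(\begin{smallmatrix}P^{b}_{2^{n-1}-2}(-x) & P^{b}_{2^{n-1}-1}(-x)\\ Q^{b}_{2^{n-1}-2}(-x) & Q^{b}_{2^{n-1}-1}(-x)\end{smallmatrix}\right)$. Multiplying the two matrices and reading off the right-hand column gives \eqref{eq:rs-pb-1} and \eqref{eq:rs-qb-1}.

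The only conceptual step is recognizing that the letter-level involution $\iota$ translates into the polynomial-level substitution $x\mapsto -x$ at the matrix level; everything else is the same bookkeeping used in the paperfolding case. I do not anticipate any real obstacle beyond carefully tracking which arguments are $x$ and which are $-x$ when the two matrices are multiplied out, so the writeup should be short.
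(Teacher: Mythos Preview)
Your proposal is correct and follows essentially the same approach as the paper: both split the matrix product at position $2^{n-1}$ using $\sigma_{rs}^{n}(a)=\sigma_{rs}^{n-1}(a)\sigma_{rs}^{n-1}(b)$ for \eqref{eq:rs-p-1}--\eqref{eq:rs-q-1}, and $\sigma_{rs}^{n}(b)=\sigma_{rs}^{n-1}(a)\sigma_{rs}^{n-1}(c)$ together with Lemma~\ref{lem:rs-1} (so that the sign flip $\iota$ becomes the substitution $x\mapsto -x$) for \eqref{eq:rs-pb-1}--\eqref{eq:rs-qb-1}. One minor wording fix: you need both columns of the product matrix, not just the right-hand one, to cover $\ell=1$ and $\ell=2$.
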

    \begin{proof}
        The equalities \eqref{eq:rs-p-1} and \eqref{eq:rs-q-1} follows from the fact that 
        \begin{align*}
            \begin{pmatrix}
                P_{2^{n}-2}(x) & P_{2^{n}-1}(x)\\ Q_{2^{n}-2}(x) & Q_{2^{n}-1}(x)
            \end{pmatrix}  & = 
            \begin{pmatrix}
                0 & r_{0}x\\ 1 & 1
            \end{pmatrix}
            \begin{pmatrix}
                0 & r_{1}x\\ 1 & 1
            \end{pmatrix}\dots
            \begin{pmatrix}
                0 & r_{2^{n}-1}x\\ 1 & 1
            \end{pmatrix}\\
            & =  \begin{pmatrix}
                0 & r_{0}x\\ 1 & 1
            \end{pmatrix}\dots
            \begin{pmatrix}
                0 & r_{2^{n-1}-1}x\\ 1 & 1
            \end{pmatrix}
            \begin{pmatrix}
                0 & r_{2^{n-1}}x\\ 1 & 1
            \end{pmatrix}\dots
            \begin{pmatrix}
                0 & r_{2^{n}-1}x\\ 1 & 1
            \end{pmatrix}\\
            & = \begin{pmatrix}
                P_{2^{n-1}-2}(x) & P_{2^{n-1}-1}(x)\\ Q_{2^{n-1}-2}(x) & Q_{2^{n-1}-1}(x)
            \end{pmatrix}\begin{pmatrix}
                P_{2^{n-1}-2}^{b}(x) & P_{2^{n-1}-1}^{b}(x)\\ 
                Q_{2^{n-1}-2}^{b}(x) & Q_{2^{n-1}-1}^{b}(x)
            \end{pmatrix}.
        \end{align*}
        Since \(\sigma^{n}(b)=\sigma^{n-1}(a)\sigma^{n-1}(c)\), we have 
        \begin{align*}
            r_{2^{n}+2^{n-1}}r_{2^{n}+2^{n-1}+1}\dots r_{2^{n+1}-1} & =\rho(\sigma^{n-1}(c))\\
            & =\iota(\sigma^{n-1}(b))\qquad\text{(by Lemma \ref{lem:rs-1})}\\
            & = (-r_{2^{n-1}})(-r_{2^{n-1}+1})\dots(-r_{2^{n}-1}).
        \end{align*}
        Therefore, 
        \begin{align*}
            \begin{pmatrix}
                P_{2^{n}-2}^{b}(x) & P_{2^{n}-1}^{b}(x)\\ 
                Q_{2^{n}-2}^{b}(x) & Q_{2^{n}-1}^{b}(x)
            \end{pmatrix} & = 
            \begin{pmatrix}
                0 & r_{2^{n}}x\\ 1 & 1
            \end{pmatrix}
            \begin{pmatrix}
                0 & r_{2^{n}+1}x\\ 1 & 1
            \end{pmatrix}\dots
            \begin{pmatrix}
                0 & r_{2^{n+1}-1}x\\ 1 & 1
            \end{pmatrix}\\
            & = \begin{pmatrix}
                0 & r_{0}x\\ 1 & 1
            \end{pmatrix}
            \begin{pmatrix}
                0 & r_{1}x\\ 1 & 1
            \end{pmatrix}\dots
            \begin{pmatrix}
                0 & r_{2^{n-1}-1}x\\ 1 & 1
            \end{pmatrix}\\
            & \qquad\cdot
            \begin{pmatrix}
                0 & -r_{2^{n-1}}x\\ 1 & 1
            \end{pmatrix}
            \begin{pmatrix}
                0 & -r_{2^{n-1}+1}x\\ 1 & 1
            \end{pmatrix}\dots
            \begin{pmatrix}
                0 & -r_{2^{n}-1}x\\ 1 & 1
            \end{pmatrix}\\
            & = \begin{pmatrix}
                P_{2^{n-1}-2}(x) & P_{2^{n-1}-1}(x)\\ 
                Q_{2^{n-1}-2}(x) & Q_{2^{n-1}-1}(x)
            \end{pmatrix}\begin{pmatrix}
                P_{2^{n-1}-2}^{b}(-x) & P_{2^{n-1}-1}^{b}(-x)\\ 
                Q_{2^{n-1}-2}^{b}(-x) & Q_{2^{n-1}-1}^{b}(-x)
            \end{pmatrix}
        \end{align*}
        which proves \eqref{eq:rs-pb-1} and \eqref{eq:rs-qb-1}.
    \end{proof}

    To obtain the Stieltjes continued fraction \(\mathrm{Stiel}_{\mathbf{r}}(x)\), we need at least one  subsequence of convergents. 
    \begin{lemma}\label{lem:7}
        For all \(j\ge 2\),
        \begin{enumerate}[label=(\arabic*)]
            \item \(Q_{2^{2j}-2}(x)  \equiv_{4} 1+2x+2(1+x)S_{2j-2}(x).\)
            \item \(Q_{2^{2j+1}-2}(x) \equiv_{4} 1+ 2(1+x)S_{2j-1}(x),\)
            \item \(Q_{2^{2j}-1}(x) \equiv_{4} 1+2x^{2}+2x^{5}+2xS_{j-2}^{o}(x) + (3+2x^{3})S_{2j-2}(x) + 2(1+x)T_{2j-2}(x) + x^{2^{2j-1}},\)
            \item \(Q_{2^{2j+1}-1}(x)  \equiv_{4} 1+2x^{2}+2x^{5}+2xS_{j-1}^{e}(x) + (3+2x^{3})S_{2j-1}(x) + 2(1+x)T_{2j-1}(x) + x^{2^{2j}},\)
            \item \(P_{2^{2j}-2}(x) \equiv_{4} 2x^{2}+2x^{5} + (1+2x^{3})S_{2j-2}(x) + 2(1+x)T_{2j-2}(x) +2xS^{o}_{j-2}(x)+ x^{2^{2j-1}},\)
            \item \(P_{2^{2j+1}-2}(x)  \equiv_{4} 2x^{2}+2x^{5} + (1+2x^{3})S_{2j-1}(x) + 2(1+x)T_{2j-1}(x) +2xS^{e}_{j-1}(x)+ x^{2^{2j}},\)
            \item \(P_{2^{2j}-1}(x) \equiv_{4} x+2x^{2}+2x^{3}+2x^{4}+2x^{5}+2x^{3}S_{2j-2}(x)+2xT_{2j-2}(x)+2xS^{o}_{j-2}(x),\)
            \item \(P_{2^{2j+1}-1}(x) \equiv_{4} x+2x^{3}+2x^{4}+2x^{5}+2x^{3}S_{2j-1}(x)+2xT_{2j-1}(x)+2xS^{e}_{j-1}(x),\)
            \item \(Q_{2^{2j}-2}^{b}(x)\equiv_{4} 1+2(1+x)S_{2j-2}(x),\)
            \item \(Q_{2^{2j+1}-2}^{b}(x)\equiv_{4} 1+2x+2(1+x)S_{2j-1}(x),\)
            \item \(Q_{2^{2j}-1}^{b}(x) \equiv_{4} 1+2x^{2}+2x^{5}+(3+2x^{3})S_{2j-2}(x)+(2+2x)T_{2j-2}(x) + 2xS_{j-1}^{e}(x) + x^{2^{2j-1}},\)
            \item \(Q_{2^{2j+1}-1}^{b}(x) \equiv_{4} 1+2x^{2}+2x^{5}+(3+2x^{3})S_{2j-1}(x)+(2+2x)T_{2j-1}(x) + 2xS_{j-1}^{o}(x) + x^{2^{2j}},\)
            \item \(P_{2^{2j}-2}^{b}(x) \equiv_{4} 2x^{2}+2x^{5} + (1+2x^{3})S_{2j-2}(x)+2(1+x)T_{2j-2}(x)+2xS_{j-1}^{e}(x)+x^{2^{2j-1}},\)
            \item \(P_{2^{2j+1}-2}^{b}(x) \equiv_{4} 2x^{2}+2x^{5} + (1+2x^{3})S_{2j-1}(x)+2(1+x)T_{2j-1}(x)+2xS_{j-1}^{o}(x)+x^{2^{2j}},\)
            \item \(P_{2^{2j}-1}^{b}(x) \equiv_{4} x+2x^{3}+2x^{4}+2x^{5}+2xS_{j-1}^{e}(x)+2x^{3}S_{2j-2}(x)+2xT_{2j-2}(x),\)
            \item \(P_{2^{2j+1}-1}^{b}(x) \equiv_{4} x+2x^{2}+2x^{3}+2x^{4}+2x^{5}+2xS_{j-1}^{o}(x)+2x^{3}S_{2j-1}(x)+2xT_{2j-1}(x)\).
        \end{enumerate}
    \end{lemma}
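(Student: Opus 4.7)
The plan is to prove Lemma \ref{lem:7} by strong induction on $j$, in complete analogy with the proof of Lemma \ref{lem:pq-rec} but with the index interleaved in two parities because of the period-two structure inherited from the Rudin-Shapiro substitution. For the base case I would directly compute all sixteen polynomials for $j=2$ from the definitions (i.e.\ from the first several terms of $\mathbf{r}$ and the matrix product \eqref{eq:convergents}) and check that they match the claimed formulas. For the inductive step I would assume all sixteen identities hold up to index $2j-1$ (so both the $(j-1)$-even and $(j-1)$-odd parity blocks are available) and derive the eight identities at index $2j$; then, with those in hand, I would derive the eight identities at index $2j+1$ in exactly the same way.

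The engine of the induction is the block of recurrences \eqref{eq:rs-p-1}--\eqref{eq:rs-qb-1}. The $P, Q$ recurrences are formally identical to the paperfolding case, so items (1)--(8) at index $2j$ (resp.\ $2j+1$) follow by substituting the inductive expressions for $P_{2^{2j-1}-\ell}, Q_{2^{2j-1}-\ell}, P^{b}_{2^{2j-1}-\ell}, Q^{b}_{2^{2j-1}-\ell}$ (resp.\ the same with $2j$) into \eqref{eq:rs-p-1}, \eqref{eq:rs-q-1} and then repeatedly collapsing the products with \eqref{eq:2s2}, \eqref{eq:ss}, \eqref{eq:t}, \eqref{eq:s2}. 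The new feature relative to the paperfolding case is that the $P^{b}, Q^{b}$ recurrences \eqref{eq:rs-pb-1}, \eqref{eq:rs-qb-1} evaluate $P^{b}_{2^{n-1}-\ell}$ and $Q^{b}_{2^{n-1}-\ell}$ at $-x$ rather than producing a corrective $2(\cdot)$ term. So before plugging in, I would record the parity identities
\[
S_n(-x)\equiv_4 S_n(x)-2x,\quad S^{e}_n(-x)\equiv_4 S^{e}_n(x)-2x,\quad S^{o}_n(-x)=S^{o}_n(x),\quad T_n(-x)=T_n(x),
\]
which follow from $2^0=1$ being odd while every other exponent $2^i$, $2^{2i+1}$, and $2^i+2^k$ (with $i\ge 3$, $k\ge 2$) is even. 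These are exactly the parity facts that make $S^{e}$ appear in one half of the statement and $S^{o}$ in the other half, producing the alternation seen in items (3)--(4), (5)--(6), (7)--(8), etc.

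The main obstacle will be bookkeeping: for each of the sixteen identities the inductive computation produces a polynomial involving products like $S_{n-1}(x)S_n(x)$, $S_n^2(x)$, $xT_n(x)$, and cross-terms with $x^{2^{n-1}}$, and these must be collapsed to the standard form using \eqref{eq:2s2}--\eqref{eq:s2} while paying careful attention to the low-degree correction terms (the constants $2x^2$, $2x^5$, etc.) and to cancellations modulo $4$. There is no conceptual difficulty beyond what was handled in Lemma \ref{lem:pq-rec}; the work is to organize the sixteen reductions so that each one uses the previously established identities in a definite order (typically $Q$ before $Q^{b}$ before $P$ before $P^{b}$, or some similar scheme) and so that the parity substitution $x\mapsto -x$ is applied exactly once per recurrence. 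Once the even-index block is established, the odd-index block is proved identically, and the induction closes.
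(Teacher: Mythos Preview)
Your plan is correct and matches the paper's own proof essentially line for line: the paper also verifies $j=2$ directly, assumes all sixteen congruences at level $j$, first establishes the eight $2^{2(j+1)}$-identities (the odd-numbered items) via \eqref{eq:rs-p-1}--\eqref{eq:rs-qb-1} and \eqref{eq:2s2}--\eqref{eq:s2}, and then uses those to derive the eight $2^{2(j+1)+1}$-identities (the even-numbered items). Your parity identities for $S_n(-x)$, $S^{e}_n(-x)$, $S^{o}_n(-x)$, $T_n(-x)$ are exactly what the paper uses implicitly when it writes things like $P^{b}_{2^{2j+2}-2}(-x)\equiv_{4}P^{b}_{2^{2j+2}-2}(x)-2x$; the only cosmetic differences are that the paper inlines these substitutions rather than stating them separately, and it organizes the sixteen reductions in the order $Q,P,Q^{b},P^{b}$ rather than your suggested $Q,Q^{b},P,P^{b}$ (which is immaterial since within each parity block the computations are independent).
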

    \begin{proof}
        The initial values for \(j=2\) can be calculated directly. Now we suppose the result holds for \(j\). We verify it for \(j+1\).
        
        (1) By \eqref{eq:rs-q-1} and the induction hypothesis (2) and (10), we have 
        \begin{align*}
            Q_{2^{2(j+1)}-2}(x) & \equiv_4 Q_{2^{2j+1}-2}(x)P^{b}_{2^{2j+1}-2}(x) + Q_{2^{2j+1}-1}(x)Q^{b}_{2^{2j+1}-2}(x)\\
            & \equiv_{4} P^{b}_{2^{2j+1}-2}(x) + Q_{2^{2j+1}-1}(x)+2x+2S_{2j-1}(x)+2x^{2^{2j}+1}\\
            & \equiv_{4} \left(1+2xS_{2j-1}(x) + 2x^{2^{2j}}\right) + 2x+2S_{2j-1}(x)+2x^{2^{2j}+1}\\
            & \equiv_{4} 1+2x+2(1+x)S_{2j}(x)
        \end{align*}
        where the next to the last equality follows from the inductive hypothesis (4) and (14).

        (3) By \eqref{eq:rs-q-1}, 
        \begin{align*}
            Q_{2^{2(j+1)}-1}(x) & \equiv_4 Q_{2^{2j+1}-2}(x)P^{b}_{2^{2j+1}-1}(x) + Q_{2^{2j+1}-1}(x)Q^{b}_{2^{2j+1}-1}(x)\\
            & \equiv_{4} P^{b}_{2^{2j+1}-1}(x)+2(x+x^{2})S_{2j-1}(x) + (1+S_{2j}(x))^{2} + 2x(1+S_{2j}(x))S_{2j-1}(x)\\
            & \equiv_{4}  P^{b}_{2^{2j+1}-1}(x)+2x^{2}S_{2j-1}(x)+1+2S_{2j}(x)+S^{2}_{2j}(x)+2xS_{2j}(x)S_{2j-1}(x) \\
            & \equiv_{4}  P^{b}_{2^{2j+1}-1}(x)+2x^{2}S_{2j-1}(x)+1+2S_{2j}(x)\\
            & \qquad +(3x+2x^{2}+2x^{3}+2x^{4}) + 2(x+x^{2})S_{2j}(x) + S_{2j+1}(x) + 2T_{2j}(x)\\
            & \qquad + 2x(S_{2j+1}(x)-x)+2xx^{2^{2j}}S_{2j}(x) \tag*{by \eqref{eq:s2} and \eqref{eq:ss}}\\
            & \equiv_{4} P^{b}_{2^{2j+1}-1}(x) + (1+3x+2x^{3}+2x^{4}) + 3S_{2j}(x) + x^{2^{2j+1}} + 2T_{2j}(x)\\
            & \qquad + 2xx^{2j}S_{2j-1}(x) + 2x^{2}x^{2^{2j}}\\
            & \equiv_{4} (1+2x^{2}+2x^{5}) + 2xS^{o}_{j-1}(x) + (3+2x^{3})S_{2j}(x) + x^{2^{2j+1}} + 2T_{2j}(x) \\
            & \qquad + 2xT_{2j-1}(x)+ 2xx^{2j}\left(S_{2j-1}(x) + x+x^{2}\right)\tag*{by the hypothesis (16)} \\
            & \equiv_{4} (1+2x^{2}+2x^{5}) + 2xS^{o}_{j-1}(x) + (3+2x^{3})S_{2j}(x) + x^{2^{2j+1}} + 2(1+x)T_{2j}(x). \tag*{by \eqref{eq:t}}
        \end{align*}

        (5) By \eqref{eq:rs-p-1} and the inductive hypothesis (6), (8), (10) and (14), we obtain that
        \begin{align*}
            P_{2^{2(j+1)}-2}(x) & = P_{2^{2j+1}-2}(x)P^{b}_{2^{2j+1}-2}(x) + P_{2^{2j+1}-1}(x)Q^{b}_{2^{2j+1}-2}(x)\\
            & \equiv_{4} (1+2x)S_{2j-1}^{2}(x)+2(1+x)x^{2^{2j}}S_{2j-1}(x)+x^{2^{2j+1}}\\
            & \qquad +P_{2^{2j+1}-1}(x) + 2x^{2} + 2(x+x^{2})S_{2j-1}(x)\\
            & \equiv_{4} 2x^{2} + 2x^{5} + (1+2x^{3})S_{2j}(x) + 2(1+x)T_{2j}(x) + 2xS_{j-1}^{o}(x) + x^{2^{2j+1}}.
            \tag*{by \eqref{eq:s2} and \eqref{eq:t}}
        \end{align*}

        (7) From the inductive hypothesis, we have 
        \begin{equation}\label{eq:lem:7-1}\left\{
            \begin{aligned}
                P_{2^{2j+1}-2}(x) + Q^{b}_{2^{2j+1}-1}(x) & \equiv_{4} 1+2xS_{2j-1}(x) + 2x^{2^{2j}},\\
            P_{2^{2j+1}-1}(x) - P_{2^{2j+1}-1}^{b}(x) & \equiv_{4} 2x^{2}+2xS_{2j-1}(x).
            \end{aligned}\right.
        \end{equation}
        It follows from \eqref{eq:rs-p-1} and \eqref{eq:lem:7-1} that  
        \begin{align*}
            P_{2^{2(j+1)}-1}(x) & = P_{2^{2j+1}-2}(x)P^{b}_{2^{2j+1}-1}(x) + P_{2^{2j+1}-1}(x)Q^{b}_{2^{2j+1}-1}(x)\\
            & \equiv_{4} (1+2xS_{2j-1}(x) + 2x^{2^{2j}})P^{b}_{2^{2j+1}-1}(x) + (2x^{2}+2xS_{2j-1}(x))Q^{b}_{2^{2j+1}-1}(x)\\
            & \equiv_{4} P^{b}_{2^{2j+1}-1}(x) + 2x^{2}S_{2j-1}(x) + 2x^{2^{2j}+1} \\
            & \qquad + (2x^{2}+2xS_{2j-1}(x))(1+S_{2j-1}(x)+x^{2^{2j}}) \\
            & \equiv_{4} x+2x^{2}+2x^{3}+2x^{4}+2x^{5}+2x^{3}S_{2j}(x)+2xT_{2j}(x)+2xS^{o}_{j-1}(x).
            \tag*{by \eqref{eq:2s2} and \eqref{eq:t}}
        \end{align*}

        (9) By \eqref{eq:rs-qb-1} and the inductive hypothesis,
        \begin{align*}
            Q^{b}_{2^{2(j+1)}-2}(x) & = Q_{2^{2j+1}-2}(x)P^{b}_{2^{2j+1}-2}(-x) + Q_{2^{2j+1}-1}(x)Q^{b}_{2^{2j+1}-2}(-x)\\
            & \equiv_{4} P^{b}_{2^{2j+1}-2}(-x) + 2(1+x)S_{2j-1}(x)S_{2j}(-x)\\
            &\qquad + Q_{2^{2j+1}-1}(x)+ (-2x+2(1-x)S_{2j-1}(-x))(1+S_{2j}(x))\\
            & \equiv_{4} \left(P^{b}_{2^{2j+1}-2}(-x) + Q_{2^{2j+1}-1}(x)\right) + 2x+2S_{2j-1}(x)  + 2x^{2^{2j}+1}\\
            & \equiv_{4} \left(1+2x+2xS_{2j-1}(x)+2x^{2^{2j}}\right)+ 2x+2S_{2j-1}(x)  + 2x^{2^{2j}+1}\\
            & \equiv_{4} 1 + 2(1+x)S_{2j}(x).
        \end{align*}

        (11) By \eqref{eq:rs-qb-1} and the inductive hypothesis,
        \begin{align*}
            Q^{b}_{2^{2(j+1)}-1}(x) & = Q_{2^{2j+1}-2}(x)P^{b}_{2^{2j+1}-1}(-x) + Q_{2^{2j+1}-1}(x)Q^{b}_{2^{2j+1}-1}(-x)\\
            & \equiv_{4} P^{b}_{2^{2j+1}-1}(-x) + 2x(1+x)S_{2j-1}(x)\\
            &\qquad + \left(1+S_{2j}(x)\right)^{2} + \left(1+S_{2j}(x)\right)\left(2x+2xS_{2j-1}(x)\right)\\
            & \equiv_{4} (1+x+2x^{2}+2x^{3}+2x^{4}+2x^{5}) + 2S_{2j}(x)+2xS_{j}^{e}(x)\\
            & \qquad +2(x^{2}+x^{3})S_{2j-1}(x)+ S_{2j}^{2}(x) + 2xS_{2j-1}(x)S_{2j}(x) + 2xT_{2j-1}(x)\\
            & \equiv_{4} (1+x+2x^{3}+2x^{4}+2x^{5}) + 2(1+x+x^{2}+x^{3})S_{2j}(x)+ 2xS_{j}^{e}(x) \\
            & \qquad + S_{2j}^{2}(x) +  2xT_{2j}(x)
            \tag*{by \eqref{eq:s2} and \eqref{eq:t}}\\
            & \equiv_{4} 1+2x^{2}+2x^{5} + 2(1+x^{3})S_{2j}(x)+ 2xS_{j}^{e}(x) + 2(1+x)T_{2j}(x) + S_{2j+1}(x).
            \tag*{by \eqref{eq:s2}}
        \end{align*}

        (13) By \eqref{eq:rs-pb-1} and the inductive hypothesis,
        \begin{align*}
            P^{b}_{2^{2(j+1)}-2}(x) & = P_{2^{2j+1}-2}(x)P^{b}_{2^{2j+1}-2}(-x) + P_{2^{2j+1}-1}(x)Q^{b}_{2^{2j+1}-2}(-x),\\
            & \equiv_{4} S_{2j}^{2}(x) +2x[1+S_{2j-1}(x)]S_{2j}(x) + (x+2x^{2}+2x^{3}+2x^{4}+2x^{5})\\
            & \qquad 2(x + x^{2} + x^{3})S_{2j-1}(x) +2xT_{2j-1}(x) + 2xS^{e}_{j-1}(x)\\
            & \equiv_{4} 2x^{5} + 2(x+x^{2})x^{2^{2j}} +2x^{3}S_{2j-1}(x) +S_{2j+1}(x)+2T_{2j}(x)\\
            & \qquad +2x[1+S_{2j-1}(x)]S_{2j}(x)+2xT_{2j-1}(x) + 2xS^{e}_{j-1}(x)
            \tag*{by \eqref{eq:s2}}\\
            & \equiv_{4} 2x^{5} + 2x^{2}x^{2^{2j}} +2x^{3}S_{2j-1}(x) +S_{2j+1}(x)+2T_{2j}(x)+2xS_{2j}(x) + 2xS^{e}_{j}(x)\\
            & \qquad +2xT_{2j}(x) + 2xS_{2j}(x) +2x^{2} +2(x^{2}+x^{3})x^{2^{2j}} \tag*{by \eqref{eq:ss} and \eqref{eq:t}}\\
            & \equiv_{4} 2x^{2} + 2x^{5} + 2(1+x)T_{2j}(x) +2x^{3}S_{2j}(x) +S_{2j+1}(x) + 2xS_{j}^{e}(x).
        \end{align*}

        (15) By \eqref{eq:rs-pb-1} and the inductive hypothesis,
        \begin{align*}
            P^{b}_{2^{2(j+1)}-1}(x) & = P_{2^{2j+1}-2}(x)P^{b}_{2^{2j+1}-1}(-x) + P_{2^{2j+1}-1}(x)Q^{b}_{2^{2j+1}-1}(-x),\\
            & \equiv_{4} xP_{2^{2j+1}-2}(x) + S_{2j}(x)\left(P^{b}_{2^{2j+1}-1}(x)+x\right) \\
            & \qquad + \left(1+S_{2j}(x)\right)P_{2^{2j+1}-1}(x) + x\left(Q^{b}_{2^{2j+1}-1}(x)-1+2x-S_{2j}(x)\right)\\
            & \equiv_{4} 2xS_{2j-1}(x)+2x^{2}x^{2^{2j}}+2x^{2}+2xS_{2j-1}(x)S_{2j}(x)+P_{2^{2j+1}-1}(x)\\
            & \equiv_{4} x+2x^{2}+2x^{3}+2x^{4}+2x^{5}+2xS_{2j-1}(x)+2xS^{e}_{j-1}(x)+2x^{3}S_{2j-1}(x)+2x^{2}x^{2^{2j}}\\
            & \qquad+2xS_{2j-1}(x)S_{2j}(x)+2xT_{2j-1}(x) \\
            & \equiv_{4} x+2x^{2}+2x^{3}+2x^{4}+2x^{5}+2xS_{2j-1}(x)+2xS^{e}_{j-1}(x)+2x^{3}S_{2j-1}(x)+2x^{2}x^{2^{2j}}\\
            & \qquad +2xS_{2j}(x)-2x^{2} + 2xT_{2j}(x) - 2(x^{2}+x^{3})x^{2^{2j}}
            \tag*{by \eqref{eq:ss} and \eqref{eq:t}}\\
            & \equiv_{4} x+2x^{3}+2x^{4}+2x^{5}+2x^{3}S_{2j}(x)+2xS^{e}_{j}(x)+ 2xT_{2j}(x).
        \end{align*}
        
        The above show that the odd numbered equations hold for \(h=j+1\). Based on these results, in the following, we deal with the even numbered equations.

        (2) By \eqref{eq:rs-q-1} and the equalities (1), (3), (9) and (13) for \(j+1\), we obtain that 
        \begin{align*}
            Q_{2^{2j+3}-2}(x) & = Q_{2^{2j+2}-2}(x)P^{b}_{2^{2j+2}-2}(x) + Q_{2^{2j+2}-1}(x)Q^{b}_{2^{2j+2}-2}(x)\\
            & \equiv_{4} P^{b}_{2^{2j+2}-2}(x)+ 2xS_{2j+1}(x) + 2(1+x)S_{2j}(x)S_{2j+1}(x)\\
            & \qquad +Q_{2^{2j+2}-1}(x) +2(1+x)S_{2j}(x)+2(1+x)S_{2j}(x)S_{2j+1}(x)\\
            &\equiv_{4}  1+2(1+x)S_{2j+1}(x).
        \end{align*}

        (4) By \eqref{eq:rs-q-1} and the equalities (1), (3), (11) and (15) for \(j+1\), we obtain that 
        \begin{align*}
            Q_{2^{2j+3}-1}(x) & = Q_{2^{2j+2}-2}(x)P^{b}_{2^{2j+2}-1}(x) + Q_{2^{2j+2}-1}(x)Q^{b}_{2^{2j+2}-1}(x)\\
            & \equiv_{4} P^{b}_{2^{2j+2}-1}(x)+ 2x^{2} + 2(x+x^{2})S_{2j}(x)\\
            & \qquad +\left(1+S_{2j+1}(x)\right)^{2} + 2xS_{2j}(x)(1+S_{2j+1}(x))\\
            &\equiv_{4}  P^{b}_{2^{2j+2}-1}(x)+ 1+2x^{2} + 2x^{2}S_{2j}(x)+2S_{2j+1}(x) \\
            &\qquad + S_{2j+1}^{2}(x)+2xS_{2j}(x)S_{2j+1}(x)\\
            & \equiv_{4} P^{b}_{2^{2j+2}-1}(x)+ 1+2x^{2} + 2x^{2}S_{2j}(x)+2S_{2j+1}(x) \\
            &\qquad + (3x+2x^{2}+2x^{3}+2x^{4}) + 2(x+x^{2})S_{2j+1}(x) + S_{2j+2}(x) + 2T_{2j+1}(x) \\
            & \qquad + 2x(S_{2j+2}(x)-x)+2xx^{2^{2j+1}}S_{2j+1}(x)
            \tag*{by \eqref{eq:s2} and \eqref{eq:ss}}\\
            & \equiv_{4} 1 + 2x^{2}+2x^{5} +2xS^{e}_{j}(x)+2S_{2j+1}(x) + S_{2j+2}(x)+2T_{2j+1}(x)\\
            & \qquad +2x^{3}S_{2j}(x)+ 2xT_{2j}(x)+2xx^{2^{2j+1}}S_{2j}(x)+2x^{2}x^{2^{2j+1}}\\
            & \equiv_{4} 1 + 2x^{2}+2x^{5} +2xS^{e}_{j}(x)+2(1+x^{3})S_{2j+1}(x) + S_{2j+2}(x) + 2(1+x)T_{2j+1}(x).
            \tag*{by \eqref{eq:t}}
        \end{align*}

        (6) By \eqref{eq:rs-p-1} and the equalities (5), (7), (9) and (13) for \(j+1\), we have 
        \begin{align*}
            P_{2^{2j+3}-2}(x) & = P_{2^{2j+2}-2}(x)P^{b}_{2^{2j+2}-2}(x) + P_{2^{2j+2}-1}(x)Q^{b}_{2^{2j+2}-2}(x)\\
            & \equiv_{4} S_{2j+1}^{2}(x)+2xS_{2j}(x)S_{2j+1}(x)+P_{2^{2j+2}-1}(x)+2x(1+x)S_{2j}(x)\\
            & \equiv_{4} (3x+2x^{2}+2x^{3}+2x^{4}) + 2(x+x^{2})S_{2j+1}(x) + S_{2j+2}(x) + 2T_{2j+1}(x)\\
            & \qquad + 2x(S_{2j+2}(x)-x)+2xx^{2^{2j+1}}S_{2j+1}(x)+P_{2^{2j+2}-1}(x)+2x(1+x)S_{2j}(x)
            \tag*{by \eqref{eq:s2} and \eqref{eq:ss}}\\
            & \equiv_{4}  2x^{2}+2x^{5}+ S_{2j+2}(x) + 2T_{2j+1}(x)+2x^{3}S_{2j+1}(x)+2xS^{e}_{j}(x)\\
            & \qquad + 2xx^{2^{2j+1}}S_{2j}(x)+2x^{2}x^{2^{2j+1}}+2x^{3}x^{2^{2j+1}} +2xT_{2j}(x)\\
            & \equiv_{4} 2x^{2}+2x^{5}+2x^{3}S_{2j+1}(x)+ S_{2j+2}(x) + 2(1+x)T_{2j+1}(x)+2xS^{e}_{j}(x).
            \tag*{by \eqref{eq:t}}
        \end{align*}

        (8) By \eqref{eq:rs-p-1} and the equalities (5), (7), (11) and (15) for \(j+1\), we have 
        \begin{align*}
            P_{2^{2j+3}-1}(x) & = P_{2^{2j+2}-2}(x)P^{b}_{2^{2j+2}-1}(x) + P_{2^{2j+2}-1}(x)Q^{b}_{2^{2j+2}-1}(x)\\
            & \equiv_{4} xP_{2^{2j+2}-2}(x) + S_{2j+1}(x)\left(P^{b}_{2^{2j+2}-1}(x)-x\right) \\
            & \qquad + xQ^{b}_{2^{2j+2}-1}(x)+\left(1+S_{2j+1}(x)\right)\left(P_{2^{2j+2}-1}(x)-x\right)\\
            & \equiv_{4} x\left(P_{2^{2j+2}-2}(x)+Q^{b}_{2^{2j+2}-1}(x)\right)+S_{2j+1}(x)\left(P^{b}_{2^{2j+2}-1}(x)+P_{2^{2j+2}-1}(x)\right)\\
            & \qquad -x-2xS_{2j+1}(x)+P_{2^{2j+2}-1}(x)\\
            & \equiv_{4} x\left(1+2xS_{2j}(x)+2x^{2^{2j+1}}\right) + S_{2j+1}(x)\left(2x+2x^{2}+2xS_{2j}(x)\right)\\
            & \qquad -x-2xS_{2j+1}(x)+P_{2^{2j+2}-1}(x)\\
            & \equiv_{4} 2(x + x^{2})x^{2^{2j+1}} + 2xS_{2j}(x)S_{2j+1}(x) \\
            & \qquad +x+2x^{2}+2x^{3}+2x^{4}+2x^{5}+2x^{3}S_{2j}(x)+2xT_{2j}(x)+2xS^{o}_{j-1}(x)\\
            & \equiv_{4} x+2x^{3}+2x^{4}+2x^{5}+2x^{3}S_{2j+1}(x)+2xT_{2j+1}(x)+2xS^{e}_{j}(x).
            \tag*{by \eqref{eq:ss} and \eqref{eq:t}}
        \end{align*} 

        (10) By \eqref{eq:rs-qb-1} and the equalities (1), (3), (9) and (13) for \(j+1\), we have 
        \begin{align*}
            Q^{b}_{2^{2j+3}-2}(x) & = Q_{2^{2j+2}-2}(x)P^{b}_{2^{2j+2}-2}(-x) + Q_{2^{2j+2}-1}(x)Q^{b}_{2^{2j+2}-2}(-x)\\
            & \equiv_{4} Q_{2^{2j+2}-2}(x)\left(P^{b}_{2^{2j+2}-2}(x)-2x\right) + Q_{2^{2j+2}-1}(x)Q^{b}_{2^{2j+2}-2}(x)\\
            & \equiv_{4} P^{b}_{2^{2j+2}-2}(x) + 2xS_{2j+1}(x) + 2(1+x)S_{2j}(x)S_{2j+1}(x) - 2x\\
            & \qquad +Q_{2^{2j+2}-1}(x) + 2(1+x)S_{2j}(x) + 2(1+x)S_{2j}
            (x)S_{2j+1}(x)\\
            & \equiv_{4} 1+2x+2(1+x)S_{2j+1}(x).
        \end{align*}

        (12) By \eqref{eq:rs-qb-1} and the equalities (1), (3), (11) and (15) for \(j+1\), we have 
        \begin{align*}
            Q^{b}_{2^{2j+3}-1}(x) & = Q_{2^{2j+2}-2}(x)P^{b}_{2^{2j+2}-1}(-x) + Q_{2^{2j+2}-1}(x)Q^{b}_{2^{2j+2}-1}(-x)\\
            & \equiv_{4} Q_{2^{2j+2}-2}(x)\left(P^{b}_{2^{2j+2}-1}(x)-2x\right) + Q_{2^{2j+2}-1}(x)\left(Q^{b}_{2^{2j+2}-1}(x)-2x\right)\\
            & \equiv_{4} P^{b}_{2^{2j+2}-1}(x)+2x^{2}+2(x+x^{2})S_{2j}(x)-2x\\
            & \qquad +\left(1+S_{2j+1}(x)\right)^{2}+2xS_{2j}(x)\left(1+S_{2j+1}(x)\right)-2x-2xS_{2j+1}(x)\\
            & \equiv_{4} 1+2x^{2}+2x^{5} + 2x^{2}x^{2^{2j+1}}+S_{2j+2}(x)+2T_{2j+1}(x)+2xx^{2^{2j+1}}S_{2j}(x)\\
            & \qquad + 2S_{2j+1}(x) + 2x^{3}S_{2j}(x) +2xT_{2j}(x) + 2xS^{o}_{j}(x)
            \tag*{by \eqref{eq:s2} and \eqref{eq:ss}}\\
            & \equiv_{4} 1+2x^{2}+2x^{5} + 2(1+x^{3})S_{2j+1}(x) + S_{2j+2}(x) + 2(1+x)T_{2j+1}(x)+2xS^{o}_{j}(x).
            \tag*{by \eqref{eq:t}}
        \end{align*}

        (14) By \eqref{eq:rs-pb-1} and the equalities (5), (7), (9) and (13) for \(j+1\), we have 
        \begin{align*}
            P^{b}_{2^{2j+3}-2}(x) & = P_{2^{2j+2}-2}(x)P^{b}_{2^{2j+2}-2}(-x) + P_{2^{2j+2}-1}(x)Q^{b}_{2^{2j+2}-2}(-x)\\
            & \equiv_{4} P_{2^{2j+2}-2}(x)\left(P^{b}_{2^{2j+2}-2}(x)-2x\right) + P_{2^{2j+2}-1}(x)Q^{b}_{2^{2j+2}-2}(x)\\
            & \equiv_{4} S_{2j+1}^{2}(x) + 2xS_{2j}(x)S_{2j+1}(x) - 2xS_{2j+1}(x) + P_{2^{2j+2}-1}(x) + 2x(1+x)S_{2j}(x)\\
            & \equiv_{4} (3x+2x^{3}+2x^{4}) + 2(x+x^{2})x^{2^{2j+1}} + S_{2j+2}(x) + 2T_{2j+1}(x)\\
            & \qquad +2xx^{2^{2j+1}}S_{2j}(x) + P_{2^{2j+2}-1}(x)  \tag*{by \eqref{eq:s2} and \eqref{eq:ss}}\\
            & \equiv_{4} 2x^{2}+2x^{5}+2x^{3}S_{2j+1}(x)+2(1+x)T_{2j+1}(x)+2xS^{o}_{j}(x) + S_{2j+2}(x). 
            \tag*{by \eqref{eq:t}}  
        \end{align*}

        (16) By \eqref{eq:rs-pb-1} and the equalities (5), (7), (11) and (15) for \(j+1\), we have 
        \begin{align*}
            P^{b}_{2^{2j+3}-1}(x) & = P_{2^{2j+2}-2}(x)P^{b}_{2^{2j+2}-1}(-x) + P_{2^{2j+2}-1}(x)Q^{b}_{2^{2j+2}-1}(-x)\\
            & \equiv_{4} P_{2^{2j+2}-2}(x)\left(P^{b}_{2^{2j+2}-1}(x)-2x\right) + P_{2^{2j+2}-1}(x)\left(Q^{b}_{2^{2j+2}-1}(x)-2x\right)\\
            & \equiv_{4} x\left(P_{2^{2j+2}-2}(x)+Q^{b}_{2^{2j+2}-1}(x)\right) + S_{2j+1}(x)\left(P_{2^{2j+2}-1}(x)+P^{b}_{2^{2j+2}-1}(x)\right)\\
            & \qquad -x-2x^{2} + P_{2^{2j+2}-1}(x)\\
            & \equiv_{4} 2x^{2}x^{2^{2j+1}} + 2xS_{2j}(x) + 2xS_{2j}(x)S_{2j+1}(x) - 2x^{2} + P_{2^{2j+2}-1}(x)\\
            & \equiv_{4} 2x^{2}x^{2^{2j+1}} + 2xx^{2^{2j+1}}S_{2j}(x) + +2x^{3}S_{2j}(x)+2xT_{2j}(x)\\
            & \qquad +x+2x^{2}+2x^{3}+2x^{4}+2x^{5}+2xS^{o}_{j}(x)
            \tag*{by \eqref{eq:ss}}\\
            & \equiv_{4} x+2x^{2}+2x^{3}+2x^{4}+2x^{5}+2xS^{o}_{j}(x) + 2x^{3}S_{2j+1}(x) + 2xT_{2j+1}(x).
            \tag*{by \eqref{eq:t}}
        \end{align*}
    Now, we have verified all the 16 equalities for \(j+1\). By induction, the result holds.
    \end{proof}

    With the help of such a subsequence of convergents, we are able to show the following congruence relation.
    \begin{proposition}\label{prop:rs-algebraic}
        The Stieltjes continued fraction \(\mathrm{Stiel}_{\mathbf{r}}(x)\) defined by the Rudin-Shapiro sequence \(\mathbf{r}\) is congruent modulo $4$ to an algebraic series in \(\mathbb{Z}[[x]]\). Namely, \(\mathrm{Stiel}_{\mathbf{r}}(x)\equiv_{4} x+2x^{2}+ 2x^{3} +(3x+2x^{3})\phi(x)+x\sqrt{1-4x\phi(x)}.\)
    \end{proposition}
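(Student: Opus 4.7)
The plan is to follow the strategy of Proposition \ref{prop:algebraic}: choose a subsequence of convergents $P_{n_k}/Q_{n_k}$ for which $Q_{n_k}(x)^{2} \equiv_{4} 1$, so that (using $Q_{n_k}(0)=1$) one has $P_{n_k}/Q_{n_k}\equiv_{4} P_{n_k}Q_{n_k}$. From Lemma \ref{lem:7}(2), $Q_{2^{2j+1}-2}(x)\equiv_{4} 1+2(1+x)S_{2j-1}(x)$ has the form $1+2g(x)$ with $g(0)=0$, which makes $Q^{2}\equiv_{4}1$ automatic. Multiplying the formulas for $P_{2^{2j+1}-2}$ and $Q_{2^{2j+1}-2}$ from Lemma \ref{lem:7} and letting $j\to\infty$ (so that $x^{2^{2j}}\to 0$ in $\mathbb{F}_{4}[[x]]$, the $S_k, T_k, S^e_k$ tend to their infinite analogs, and cross terms of the form $2\cdot 2$ vanish), I expect to obtain
\[\mathrm{Stiel}_{\mathbf{r}}(x) \equiv_{4} 2x^{2}+2x^{5} + (1+2x^{3})S_{\infty} + 2(1+x)T_{\infty} + 2xS^{e}_{\infty} + 2(1+x)S_{\infty}^{2}.\]

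Next, I would substitute the mod-$4$ identities developed during the proof of Proposition \ref{prop:algebraic}: namely $S_{\infty}\equiv_{4}(x\phi)^{4}+x+x^{2}$, $S_{\infty}^{2}\equiv_{4}(x\phi)^{2}$, and (from \eqref{eq:ct}) an expression for $2T_{\infty}$ that is linear in $\phi$ and $S_{\infty}$. After substitution, the functional equation $x\phi^{2}=\phi-1$ can be used repeatedly to reduce every power $\phi^{k}$ with $k\ge 2$, leaving an expression of the shape $a(x)+b(x)\phi+2xS^{e}_{\infty}(x)$ modulo $4$, with $a,b\in\mathbb{Z}[x]$.

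The main new obstacle is to interpret the $\sqrt{1-4x\phi(x)}$ term in the target modulo $4$. From $\sqrt{1-4z}=1-2z\phi(z)$ (via \eqref{eq:gen_catalan}) one has $x\sqrt{1-4x\phi}=x-2x\cdot x\phi(x)\cdot\phi(x\phi(x))$, and the crux is the mod-$2$ identity
\[x\phi(x)\cdot \phi(x\phi(x)) \equiv_{2} S^{e}_{\infty}(x).\]
To prove this, I would use Lemma \ref{lem:catalan} to get $\phi(y)\equiv_{2} y^{-1}S_{\infty}(y)$, hence $y\phi(y)\equiv_{2} S_{\infty}(y)$; with $y=x\phi(x)$ and the earlier observation $x\phi\equiv_{2} S_{\infty}$, the claim reduces to $S_{\infty}(S_{\infty}(x))\equiv_{2} S^{e}_{\infty}(x)$. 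Since $S_{\infty}^{2^{k}}\equiv_{2} S_{\infty}(x^{2^{k}})=\sum_{j\ge k}x^{2^{j}}$, the double sum $\sum_{k\ge 0}S_{\infty}^{2^{k}}$ picks up the coefficient $j+1$ at $x^{2^{j}}$, and only even $j$ survive modulo $2$, giving precisely $S^{e}_{\infty}(x)$.

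Once this is in place, $x\sqrt{1-4x\phi}\equiv_{4} x - 2xS^{e}_{\infty}\equiv_{4} x + 2xS^{e}_{\infty}$, and the proposed congruence becomes an identity in $x$ and $\phi$ over $\mathbb{Z}/4\mathbb{Z}$ (the $S^{e}_{\infty}$ terms on both sides cancel) that can be checked by expanding using $x\phi^{2}=\phi-1$. I expect the mod-$2$ chain $x\phi\cdot\phi(x\phi)\equiv_{2} S_{\infty}(S_{\infty})\equiv_{2} S^{e}_{\infty}$ to be the only genuinely non-routine step; the rest is bookkeeping, albeit delicate.
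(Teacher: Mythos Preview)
Your plan is correct and follows the paper's route almost exactly: the same subsequence $P_{2^{2j+1}-2}/Q_{2^{2j+1}-2}$, the same reduction $P/Q\equiv_{4}PQ$ via $Q^{2}\equiv_{4}1$, and the same conversion of $S_{\infty},T_{\infty}$ into expressions in $\phi$ using \eqref{eq:ct} and \eqref{eq:s_mod_4}. Your intermediate limit differs only cosmetically from the paper's (you keep the term $2(1+x)S_{\infty}^{2}$, whereas the paper first applies \eqref{eq:ss} and absorbs it into the coefficient of $S_{\infty}$; the two forms agree modulo $4$).

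The one genuine variation is how you tie $2xS^{e}_{\infty}$ to $x\sqrt{1-4x\phi}$. The paper invokes \cite{HH19}: from the quadratic $(S^{e}_{\infty})^{2}+S^{e}_{\infty}\equiv_{2}S_{\infty}\equiv_{2}x\phi$ one solves $S^{e}_{\infty}\equiv_{2}\tfrac{-1+\sqrt{1-4x\phi}}{2}$. You instead use $\sqrt{1-4z}=1-2z\phi(z)$ to write $x\sqrt{1-4x\phi}=x-2x\cdot x\phi\cdot\phi(x\phi)$ and then prove $x\phi\cdot\phi(x\phi)\equiv_{2}S^{e}_{\infty}$ via the composition identity $S_{\infty}(S_{\infty}(x))\equiv_{2}S^{e}_{\infty}(x)$. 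These are two presentations of the same fact (indeed $\tfrac{-1+\sqrt{1-4x\phi}}{2}=-x\phi\cdot\phi(x\phi)$), but your derivation is self-contained and avoids the external citation; the paper's has the advantage of making the algebraic nature of $S^{e}_{\infty}$ over $\mathbb{F}_{2}(x,\phi)$ transparent via a degree-$2$ equation.
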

    \begin{proof}
    The Stieltjes continued fraction \(\mathrm{Stiel}_{\mathbf{r}}(x)\) can be obtained by a subsequence of its convergents. Namely,
        \begin{align*}
        \mathrm{Stiel}_{\mathbf{r}}(x) & = \lim_{j\to\infty} \frac{P_{2^{2j+1}-2}(x)}{Q_{2^{2j+1}-2}(x)}\\
        & \equiv_{4} \lim_{j\to\infty} \frac{P_{2^{2j+1}-2}(x)Q_{2^{2j+1}-2}(x)}{Q^{2}_{2^{2j+1}-2}(x)} \tag*{since \(Q_{2^{2j+1}-2}(0)=1\)}\\
        & \equiv_{4} \lim_{j\to\infty} \left(P_{2^{2j+1}-2}(x)+2(1+x)S_{2j-1}(x)S_{2j}(x)\right)
        \tag*{by Lemma \ref{lem:7} (2)\ \& (6)}\\
        & \equiv_{4} \lim_{j\to\infty} \left(P_{2^{2j+1}-2}(x)+2(1+x)(S_{2j+1}(x)-x)+2(1+x)x^{2^{2j}}S_{2j}(x)\right)
        \tag*{by \eqref{eq:ss}}\\
        & \equiv_{4} \lim_{j\to\infty} \left(2x+2x^{5} + (3+2x)S_{2j}(x)+2x^{3}S_{2j-1}(x) +2xS^{e}_{j-1}(x)\right.\\
        &\qquad\qquad\quad \left.  + 2(1+x)x^{2^{2j}}S_{2j-1}(x) + 2(1+x)T_{2j-1}(x) \right)
        \tag*{by Lemma \ref{lem:7} (6)}\\
        & \equiv_{4} \lim_{j\to\infty} \left(2x+2x^{5} + (3+2x^{3})S_{2j}(x) +2xS^{o}_{j-1}(x) + 2(1+x)T_{2j}(x) \right)\tag*{by \eqref{eq:t}}\\
        & \equiv_{4} 2x+2x^{5} + (3+2x^{3})S_{\infty}(x) +2xS^{o}_{\infty}(x) + 2(1+x)T_{\infty}(x).
    \end{align*}
    Recall that \(\phi(x) = \frac{1-\sqrt{1-4x}}{2x}\). According to \eqref{eq:ct}, one has
    \[2(1+x)T_{\infty}(x)\equiv_{4} (x+x^{2})\phi(x) - (1+x)S_{\infty}(x)+2x(1+x^{2})S_{\infty}(x)+(2x^{2}+2x^{5}).\]
    Consequently, 
    \begin{align*}
    \mathrm{Stiel}_{\mathbf{r}}(x) & \equiv_{4} 2x+2x^{2} + (2+3x)S_{\infty}(x) +2xS^{e}_{\infty}(x) +(x+x^{2})\phi(x).
    \end{align*}
    It follows from \eqref{eq:ct} and \eqref{eq:s_mod_4} that
    \begin{align*}
    (2+3x)S_{\infty}(x)& \equiv_{4} 2x(1+x)\phi(x)+x\left((x\phi(x))^{4}+x+x^{2}\right)\\
    & \equiv_{4} (2x+3x^{2}+2x^{3})\phi(x) + 2x^{3}.
    \end{align*}
    By observing that \(S^{e}_{\infty}(x)^2 + S^{e}_{\infty}(x)\equiv_{2}S^{e}_{\infty}(x^2) + S^{e}_{\infty}(x)=S_{\infty}(x)\), it has been shown in \cite{HH19} that
    \[S^{e}_{\infty}(x)\equiv_{2} \frac{-1+\sqrt{1-4x\phi(x)}}{2} = \frac{-1+\sqrt{2\sqrt{1-4x}-1}}{2}.\]
    Then 
    \begin{align*}
    \mathrm{Stiel}_{\mathbf{r}}(x) & \equiv_{4} 2x+2x^{2} + (2+3x)S_{\infty}(x) +2xS^{e}_{\infty}(x) +(x+x^{2})\phi(x)\\
    & \equiv_{4} x+2x^{2}+ 2x^{3} +(3x+2x^{3})\phi(x)+x\sqrt{1-4x\phi(x)}. \qedhere
    \end{align*}
    \end{proof}
    
    The algebraicity of \(\mathrm{Stiel}_{\mathbf{r}}(x)\) yields the automaticity of \(\mathrm{Stiel}_{\mathbf{r}}\ \mathrm{mod}\ 4\).  
	\begin{proof}[Proof of Theorem \ref{thm:02}]
        The first part follows from Proposition \ref{prop:rs-algebraic}. Then using Theorem \ref{thm:ckmr}, we see that \((b_{n})_{n\ge 0}\) is \(2\)-automatic.
    \end{proof}

    \section*{Acknowledgement}
    The research was partially supported by Guangdong Natural Science Foundation (Nos.  2018A030313971, 2018B0303110005). This work was finished while the author was visiting the Department of Mathematics and Statistics, University of Helsinki. The visit was supported by China Scholarship Council (File No. 201906155024). The author would like to thank the anonymous referee and Yi-Ning Hu for valuable suggestions.

\end{document}